\definecolor{deepskyblue}{rgb}{0.0, 0.75, 1.0}
\theoremstyle{remark}
\newtheorem{assumption}{Assumption}
\newtheorem{remark}{Remark}
\newtheorem{lemma}{Lemma}
\newtheorem{theorem}{Theorem}
\newtheorem{definition}{Definition}
\newcommand{\bgamma}{\boldsymbol{{\gamma}}}
\newcommand{\bxi}{\boldsymbol{\xi}}
\newcommand{\tpitchfork}{%
  \vbox{
    \baselineskip\z@skip
    \lineskip-.52ex
    \lineskiplimit\maxdimen
    \m@th
    \ialign{##\crcr\hidewidth\smash{$-$}\hidewidth\crcr$\pitchfork$\crcr}
  }%
}
\def\R{\mathbb R}
\def\be{\begin{equation}}
\def\ee{\end{equation}}
\def\beqa{\begin{eqnarray}}
\def\eeqa{\end{eqnarray}}
\def\coi{c_0^{-1}}
\def\cot{c_0^{-2}}
\def\nx{d_{\bm x}}
\def\dR{\dot{R}}
\def\ddR{\ddot{R}}
\def\ie{{\it i.e.}}
\def\eg{{\it e.g.}}
\def\bm{\mathbf }
\title[Microlocal analysis of Doppler SAR]{Microlocal analysis of \\ Doppler Synthetic Aperture Radar}
\author[Raluca Felea, Romina Gaburro, Allan Greenleaf and  Clifford Nolan]{}
\subjclass{Primary: 78A46, 35R30; Secondary: 35S30.}
 \keywords{Synthetic aperture radar, inverse problem, reflectivity function, Fourier integral operator, artifact.}
\email{rxfsma@rit.edu}
\email{Romina.Gaburro@ul.ie}
\email{allan@math.rochester.edu}
\email{Clifford.Nolan@ul.ie}
\begin{document}
\maketitle

% Enter the first author's name and address:
\centerline{\scshape Raluca Felea}
\medskip
{\footnotesize
% please put the address of the first author
 \centerline{School of Mathematical Sciences}
   \centerline{Rochester Institute of Technology}
   \centerline{ Rochester, NY, 14623, USA}
} % Do not forget to end the {\footnotesize by the sign }

\medskip

\centerline{\scshape Romina Gaburro}
\medskip
{\footnotesize
 % please put the address of the second  and third author
 \centerline{ Department of Mathematics and Statistics}
  \centerline{ and Health Research Institute}
   \centerline{University of Limerick}
   \centerline{Limerick, V94 T9PX, Ireland}
}

\medskip

\centerline{\scshape Allan Greenleaf}
\medskip
{\footnotesize
 % please put the address of the second  and third author
 \centerline{Department of Mathematics}
   \centerline{University of Rochester}
   \centerline{Rochester, NY, 14627, USA}
}

\medskip

\centerline{\scshape Clifford Nolan}
\medskip
{\footnotesize
 % please put the address of the second  and third author
 \centerline{ Department of Mathematics and Statistics}
  \centerline{ and Health Research Institute}
   \centerline{University of Limerick}
   \centerline{Limerick, V94 T9PX, Ireland}
}

\bigskip

\begin{abstract}
We study the existence and suppression  of artifacts for a Doppler-based Synthetic Aperture Radar (DSAR) system.
The idealized air- or space-borne system transmits a continuous wave at a fixed frequency
and a co-located receiver measures the resulting scattered waves;
a windowed Fourier transform  then converts the raw data into a function of two variables: slow time and frequency.
Under simplifying assumptions, we analyze the linearized forward scattering map
and the feasibility of inverting it via filtered backprojection,
using techniques of microlocal analysis which robustly describe how sharp features in the target appear in the data.
For DSAR with a  straight flight path, there is, as with conventional SAR, a left-right ambiguity artifact in the DSAR image,
which can be avoided via beam forming to the left or right.
For  a circular flight path, the artifact has a more complicated structure,
but filtering out echoes coming from straight ahead or behind the transceiver,
as well as those outside a critical range, produces an artifact-free  image.
 We show that these results  are qualitatively robust; although initially derived under an approximation
widely used  for range-based SAR,
they are either structurally stable or robust with respect to  a more accurate model.

\end{abstract}

\maketitle
%\tableofcontents

%\bigskip

%%%%%%%%%%%%%%%%%%%%%%%%%%%%%%%

\section{Introduction}
Synthetic aperture radar (SAR) systems transmit electromagnetic waves from an airborne or spaceborne antenna,
which then  scatter from objects of interest on the terrain,
e.g.,  roads, vehicles,  buildings or natural features.
The scattered waves are  measured by a receiver either co-located with the  transmitting antenna (monostatic SAR)
or on one or more other platforms (bi- or multi-static).
Radar transmitters  operate in a variety of modes, with the emitted waves  ranging from wideband pulses
to ultra-narrowband, continuous wave (CW) signals.
The Doppler Synthetic Aperture Radar (DSAR) system we analyze  in this paper is of the latter type,
 with narrow temporal windowing superimposed  on the received scattered waves
generated by a single frequency transmitted wave.
(This approach to applying temporal windows to CW signals has some similarity to what is known as
pseudo-pulse processing in the radar literature, which has been used previously for SAR \cite{Rigling06}.)
Possible uses of DSAR include low power applications and imaging through media
which are either dispersive or have frequency-dependent attenuation.

The narrowband nature of the DSAR waveforms allows accurate measurement of Doppler frequency shifts,
which in turn can be used to obtain accurate measurements of relative velocity.
Knowledge of relative velocity between stationary scattering objects and the antenna,
whose trajectory is assumed known,  is then used to locate and image the  objects.
The dual concept of  using Doppler shifts to
 image rotating or moving objects, using measurements by a stationary  antenna, was proposed some time ago
  by Thomson and Ponsonby \cite{TP}; see also \cite{RM}.
More recently, the use of a moving antenna over a horizontal terrain was proposed in Borden and Cheney \cite{BoCh04}.
In Wang and Yazici \cite{WangYaz2012}, this approach was named Doppler Synthetic Aperture Radar  and examined for
bistatic data acquisition;
related ideas were further developed in \cite{CBG, SFL, WHBBC,YLY,WangYaz2014,YSY18}.
The reader can also consult these
papers  for more extensive surveys of the earlier literature.

In contrast to DSAR, in standard SAR  the transmitted fields often consist of
 a train of pulses of short temporal duration and complicated spectrum;
the short duration allows for high-resolution estimates of the time between
transmitting and receiving antennas (whether in monostatic, bi-static or multi-static geometries),
and this travel time is used to estimate the distance (range) between the antenna and the scattering
objects.
This process is repeated from many locations along the transmitter flight path, providing distance estimates from many different
viewing angles, from which both the range and the along-track position of scattering objects can be determined.
\medskip

In this paper,  we study monostatic DSAR using methods of  microlocal analysis.
Microlocal techniques allow one to  develop  backprojection algorithms for reconstruction of features on the ground (which we call a {\it scene}) up to smooth errors,
as well as analyze obstructions to such reconstruction in the form of imaging {\it artifacts} {(fictitious features in reconstructions of a scene)}.
Such methods have been highly successful in either revealing artifacts,
arising in conventional monstatic and bistatic SAR, or in rigorously explaining artifacts that have already been observed. See \cite{F04},   \cite{NS97} and \cite{StefUhl13} for examples of artifacts that are either predicted or explained by microlocal analysis. Also see \cite{FeleaRemov} for an example of how to ameliorate the effects of artifacts.
We  describe what types of artifacts appear in DSAR and obtain sufficient conditions under which they can be excluded,
 allowing for robust imaging of sharp features  on the ground (such as walls and edges) via filtered backprojection.

For simplicity, we treat  the problem of  imaging a stationary scene located on a flat terrain.
After considering the structure of DSAR for general flight paths, we focus on two model  trajectories,
namely a straight flight path and a circular one.
For these we characterize what artifacts arise in the imaging,
and describe criteria for avoiding them (see Sections 6 and 7).

More precisely, in this paper we  first use the Born approximation and other approximations to create a forward map $\mathcal F$
taking the scene $V(\bm x)$ to be imaged to a windowed Fourier transform $W(s,\omega)$ of the  DSAR data.
(Here, $\bm x$ are coordinates on the ground, $s$ is the slow time parametrizing the transceiver's flight path  $\gamma(s)$ (assumed smooth) and $\omega$ denotes frequency.)
We then analyze $\mathcal F$,
showing  that it is a Fourier integral operator, and study   the implications
of its  microlocal geometry for imaging  by filtered backprojection.

For the linear  trajectory,  we show in Theorem \ref{thm line} that
there is a left-right artifact, similar to that which arises in monostatic SAR (see Remark \ref{rem linear}), and
which can be avoided using beam forming to the left or right of the flight track.
Furthermore, it is shown in Sec. \ref{sec beyond} that this conclusion still holds under
a more refined and realistic model,
indicating that our conclusions are robust.
For a circular flight path, the geometry is different and the artifacts are more complicated (see Remark \ref{rem circular}),
but  in Theorem \ref{thm circle} we are able to precisely characterize the artifacts, and then give criteria for avoiding  them (\eg, see
\eqref{inj_cond} in Lemma \ref{lem criterion}).

\section{The Doppler SAR model}\label{sec Doppler}

We consider an aircraft- or satellite-borne antenna,
transmitting a time-harmonic signal $e^{-i \omega_0 t}$ that scatters off the terrain and is then detected with the same antenna.
The received signal is then used to produce an image of the terrain;
 we follow \cite{BoCh04} closely for the initial modeling;
in particular we ignore polarization and use the scalar, time-domain wave equation,
\begin{equation}\label{wave equation}
\left(\nabla^2 -\frac{1}{c^2(\bm y)}\partial^{2}_t\right)E(t,\bm y)=f(t, \bm y),
\end{equation}
 where $\bm y \in \R^3$, $E$ is (one component of the) electric field,
 $f$ describes the source, and the function $c$ is the wave propagation speed.

 For the majority of  this paper, we take the source $f$ to be of constant angular frequency,  of the form $f(t, \bm y) = e^{-i \omega_0 t} \delta(\bm y - \bm \gamma(t))$, where
 ${\gamma} :=\left\{\bm\gamma(t)\:|\: t_{min}<t<t_{max}\right\}$ is
the curve describing the antenna flight path.
In particular,  in this model we assume that the antenna radiates isotropically.
More generally, following \cite{CB} and \cite{NC02}, we recall below how an explicit antenna beam pattern can be incorporated in $f$, but this will only affect the amplitudes and not the phase function and geometry.
\medskip

To avoid irrelevant degeneracies, we impose some assumptions,
\medskip

\begin{assumption} \label{height_above}
The flight  path  is strictly above the ground and,
in the region between the flight path and the ground,
 $c(\bm y)=c_0$, the constant speed of light in dry air.
\end{assumption}
\medskip

We define the \textit{reflectivity function}, $\frac{1}{c_0^2}-\frac{1}{c^2(\bm y)}$,
which encodes  changes in the propagation speed.
Electromagnetic waves attenuate rapidly as they propagate into the earth, which is modeled by the following assumption:
\medskip

\begin{assumption}\label{reflectivity}
The reflectivity function  is of the form $V(\bm y' )\delta(y_3),\, \bm y'\in\R^2$.
\end{assumption}
\medskip

We provide below a very brief sketch of the linearized model of the scattered waves and we refer the reader to \cite{NC02} for more details.  If we formally linearize (\ref{wave equation}) by writing $c=c_0+\delta c, E=E_0+\delta E$, then the {\em scattered field}, $\delta E$, approximately satisfies
\begin{equation} \label{linearized wave equation}
\left(\nabla^2 -\frac{1}{c^2(\bm y)}\partial^{2}_t\right)\delta E(t,\bm y) = - V(\bm y,t)\partial_t^2E_0(t,\bm y)
\end{equation}
where the {\em incident field}, $E_0$, satisfies (\ref{wave equation}) with $c$ replaced by $c_0$. This is the Born approximation of the scattered field. If we wish to include an antenna beam pattern in (\ref{wave equation}), we set $f(t,\bm y)=J_s(\bm y)e^{-i\omega_0t}$, where $J_s(\bm y)$ is related to the current distribution on the antenna.

Finally, for the time being, we make the following simplifying assumption:

\begin{assumption}\label{startstop}
{\bf Start-stop approximation:} The speed of wave propagation is so large, relative to the motion of the
transceiver along $\gamma$, that the point where the wave is detected by the receiver after scattering off the terrain is the
same as where the transmitter emitted it.
\end{assumption}

The start-stop approximation is common in range-based SAR.
There, the emitted wave consists of a chain of short duration pulses and, at least for airborne systems, the speed of the platform
is small relative to the speed of light; see \cite{Tsy09,CB2011} for further discussion.
In our setup, the start-stop approximation at first glance seems much harder to justify: rather than using short duration pulses
{ (typically with  complex spectra)}, it uses a long duration/narrow bandwidth signal, idealized as continuous wave (CW).
We make this assumption because the calculations needed for the microlocal techniques
used become far more complicated without it.
However, we are able to partially justify the start-stop approximation for DSAR, {\it ex post facto},
by showing that the canonical relations associated to the  linearized forward scattering map are either structurally stable
(in the case of a circular flight path) or robust to a first-order correction (in the case of a linear flight path, as shown in Sec. \ref{sec beyond}).
\smallskip

 For $\bm x\in\mathbb{R}^2$, let
\begin{equation}\label{def R}
\bm{R}(t):= (\bm x,0)-\bm\gamma(t)\quad\textnormal{and}\quad R(t):=|\bm{R}(t)|.
\end{equation}
The start-stop approximation yields the total time of travel for a  transmitted wave, $T_{tot}$,   as follows:
the downward travel time of the wave  emitted  at time $t_{tr}=t$ from $\bm\gamma(t)$   to $\bm x$
is $\coi R(t)$; the wave is thus incident to $\bm x$ at time
$t_{in}=t+\coi R(t)$.
The scattered wave is then  detected by the receiver
at time $t_{sc}=t+T_{tot}$, with $T_{tot}$ determined implicitly by
\be\label{eqn Ttot}
c_0T_{tot}=R(t)+R\left(t+T_{tot}\right).
\ee
Under   the start-stop approximation, we have $R\left(t+T_{tot}\right)= R(t)$ and thus  $T_{tot}\approx 2\coi R(t)$.
 (This calculation will be refined in Sec. \ref{sec beyond}.)
Using the assumptions above  and   the  Green's function for the wave equation,
\begin{equation}	\label{eq:green}
G(t, {\bm y}) = { \delta(t-|{\bm y}|/c) \over 4 \pi  |{\bm y}|},
\end{equation}
{ convolving $G$ with $f$ to get $E_0$ and then convolving $G$ with the right-hand side of (\ref{linearized wave equation}),
we see that under the stop-start approximation,  the scattered wave measured on the antenna is}

\begin{equation}\label{d(t)}
d(t) := \delta E(t,\gamma(t)) \approx \int_{\R^2} \frac{e^{-i \omega_0 \left(t - 2 R\left(t\right) /c_0\right)}} {\left(4 \pi R\left(t\right)\right)^2} \ p(\bm x,\omega) V(\bm x) d\bm x.
\end{equation}
The {\em amplitude} $p(\bm x,\omega)$ is related to $J_s$ and is the {\em antenna beam pattern}, referring  to the fact that one can phase the individual elements of the antenna to interfere in such a way as to selectively illuminate desired parts of the scene on the ground.
Note that a further refinement is also possible with regard to beam-forming on the received signal \cite{NC02}. For the majority of our discussion, the beam pattern is not important and unless otherwise stated, we take $p\equiv 1$ in (\ref{d(t)}).

Note also that, for ease of notation, we suppress the dependence of $\mathbf R$ and $R$ on $x$.
 Furthermore, the slowly varying range factor in the denominator of (\ref{d(t)}), as well as any constants,
will be absorbed into the amplitude $a$ in \eqref{Wfinal} below.

The Doppler problem, like most radar problems, involves multiple time scales.
The speed of light is $c_0\approx 3 \cdot 10^8$ m/sec, whereas aircraft   speeds are typically subsonic,
or less than about $3 \cdot 10^2$ m/sec.
Even satellites in low earth orbit  travel  only on the order of 8 km/sec $\approx 10^4$ m/sec.
Moreover, the frequencies involved are usually very large, typically above 1 GHz $= 10^9$ Hertz.
To analyze the signal \eqref{d(t)}, we introduce a ``slow time'' $s$, and multiply the data by a windowing function, whose duration is (i) small relative to the antenna motion
(\ie, the distance the antenna travels during the window  is small relative to a wavelength),
but (ii) large enough so that the transmitted signal undergoes a sufficiently  large number of cycles over
the support of the window so as to be
amenable to Fourier analysis.
We take the window about $t=s$ to be of the form $\ell \left(\omega_0\left(t-s\right)\right)$,
where $\ell (t)$ is smooth, identically equal to 1 for $|t|\le L$ and supported in $|t|\le 2L$, for some appropriately chosen $L>0$.
Although of compact support, it is natural to consider
$\ell(\cdot)$  as being a symbol of order zero (see below), as the functions $\ell(\omega_0\cdot)$ are symbols of order 0 uniformly in $\omega_0$ and $L$.

Computing the resulting windowed Fourier transform of the data,
localizing near $s$ in the time variable and using $\omega$ for the frequency variable, we form
\begin{align}	\label{STFT}
W_0(s, \omega) &:= \int e^{i \omega (t-s)} \ell (\omega_0(t-s)) d(t) dt\nonumber\\
&= \int e^{i \omega (t-s)} \ell (\omega_0(t-s)) \int \frac{e^{-i \omega_0 \left(t - 2 R\left(t\right) /c_0\right)}} {\left(4 \pi R\left(t\right)\right)^2}  V(\bm x) d\bm x dt
\end{align}
In \eqref{STFT} we  Taylor expand  $R(t)$  about $t=s$ (cf. \cite[Eqns. (7-8)]{BoCh04})  as
\begin{equation}	\label{Taylor}
R(t) = R(s) + \dot{R}(s) (t-s) + \cdots,
\end{equation}
using the convention throughout  that dot denotes differentiation with respect to time.
Keeping only the linear terms then results in
the approximation of $W_0$ which will be the object of study:
\begin{align}	\label{STFTaylor}
W(s, \omega) &:= \int e^{i \omega (t-s)} \ell (\omega_0(t-s)) \int \frac{e^{-i \omega_0 \left(t - 2 \left(R\left(s\right)
+ \dot{R}(s) (t-s)\right)  /{c_0}\right)}} {(4 \pi R(s))^2}  V(\bm x) d\bm x dt.
\end{align}
After the change of variables $t \mapsto \tau = t-s$, this becomes
\begin{align}\label{Wfinal}
W(s, \omega) &= \int e^{i \omega \tau} \ell (\omega_0\tau) \int \frac{e^{-i \omega_0 (s+\tau - 2 (R (s) + \dot{R} (s) \tau)  /c_0)}}
{(4 \pi R(s))^2}  V(\bm x) d\bm x d\tau \\
& = \int  e^{i \tau (\omega - \omega_0 + 2 \omega_0 \dot{R}/c_0)}
	\underbrace{(\ell (\omega_0\tau) e^{i \omega_0 \left(2 R\left(s\right)/c_0  - s\right)})/((4 \pi R(s))^2)}_{a(s,\omega_0, \bm x;\tau)}
	d\tau\, V(\bm x) d\bm x \nonumber
\end{align}

We define the DSAR transform by $\mathcal F: V(\bm x)\to W(s,\omega)$,
and show below that from the form (\ref{Wfinal}) one can identify $\mathcal F$ as a
{\it Fourier integral operator (FIO)}. Although not stated explicitly in (\ref{Wfinal}), we will implicitly assume that $W$ has been multiplied by a smooth cut-off function with compact support. This makes physical sense but also is necessary to avoid artifacts in the backprojected image later on.
We  will describe the main ideas and results on FIOs and other aspects of microlocal analysis
as needed, but also refer the reader to \cite{Duist,Hor,Hor4} for  more detailed accounts.

\section{The DSAR transform as a Fourier Integral Operator}

A  Fourier integral operator (FIO) is an integral operator whose  kernel has the form
$K(\bm y, \bm x) =  \int e^{i  \phi(\bm y, \bm x,\tau)} a( \bm y, \bm x; \tau) d \tau$,
where the phase $\phi$ and amplitude $a$ satisfy certain conditions outlined below.
In our case, the output variables are $\bm y = (s, \omega)$; the input variables are the components of $\bm x $;
$a$ is as in \eqref{Wfinal}; and $\phi(\bm y, \bm x) = \tau(\omega - \omega_0 + 2 \omega_0 \dot{R}/c_0)$.

\begin{theorem}\label{thm F FIO}
Under assumptions \ref{height_above} and \ref{reflectivity}, the mapping $\mathcal F$ is a Fourier integral operator of order $-1/2$
 associated with the canonical relation $\mathcal C$ in \eqref{eqn Cphi} below.
\end{theorem}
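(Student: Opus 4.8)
The plan is to read the Fourier integral operator structure of $\mathcal F$ off the oscillatory representation \eqref{Wfinal} directly, treating the carrier frequency $\omega_0$ as a fixed positive constant so that the kernel carries a \emph{single} phase variable $\tau$. With
\[
 \psi(s,\omega,\bm x):=\omega-\omega_0+\tfrac{2\omega_0}{c_0}\,\dR(s)
\]
(the $\bm x$-dependence of $R$ suppressed in the notation, as in \eqref{Taylor}), the Schwartz kernel of $\mathcal F$ is $K(s,\omega,\bm x)=\int_{\R}e^{\,i\tau\psi(s,\omega,\bm x)}\,a(s,\omega,\bm x;\tau)\,d\tau$, so the candidate phase is $\phi=\tau\psi$, homogeneous of degree one in $\tau$. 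Assumption \ref{reflectivity} is what reduced the reflectivity to a function $V$ on $\R^2$, so that $\mathcal F$ is an operator on functions (or distributions) on $\R^2$; and Assumption \ref{height_above} forces $R(s)>0$, whence $R$, $\dR$, $\ddR$ and $\nx\dR$ are smooth in $(s,\bm x)$ and $1/(4\pi R(s))^{2}$ is a smooth nonvanishing factor. The amplitude $a$ of \eqref{Wfinal} is to be treated as a symbol of order $0$ in $\tau$ --- this is precisely the content of the earlier remark that $\ell(\omega_0\,\cdot)\in S^0$ uniformly in $\omega_0$ --- while the factor $e^{\,i\omega_0(2R(s)/c_0-s)}/(4\pi R(s))^{2}$ multiplying the cutoff is, for fixed $\omega_0$, smooth and bounded with bounded derivatives in $(s,\omega,\bm x)$.

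Next I would verify that $\phi$ is a nondegenerate phase function, so that the general FIO calculus of \cite{Duist,Hor,Hor4} applies. Its stationary set is $\Sigma_\phi=\{\p_\tau\phi=0\}=\{\psi=0\}$; since $\p_\omega\psi\equiv1$, the differential $d_{(s,\omega,\bm x,\tau)}(\p_\tau\phi)=d\psi$ vanishes nowhere on $\Sigma_\phi$, so $\phi$ is nondegenerate with a single fibre variable, $N=1$ (the check is immediate because $\omega$ enters $\phi$ affinely). The associated canonical relation is the image of the immersion
\[
 \Sigma_\phi\ni(s,\omega,\bm x,\tau)\ \longmapsto\ \bigl(s,\omega,\,\p_s\phi,\,\p_\omega\phi;\ \bm x,\,-\nx\phi\bigr),
\]
and differentiating $\phi=\tau\psi$ gives
\[
 \mathcal C=\Bigl\{\bigl(s,\omega,\ \tfrac{2\omega_0}{c_0}\tau\,\ddR(s),\ \tau;\ \ \bm x,\ -\tfrac{2\omega_0}{c_0}\tau\,\nx\dR(s)\bigr)\ :\ \omega-\omega_0+\tfrac{2\omega_0}{c_0}\dR(s)=0,\ \tau\neq0\Bigr\},
\]
which is the set \eqref{eqn Cphi}. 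One then checks that $\mathcal C$ is conic (scaling $\tau$ rescales every covector component while fixing $(s,\omega,\bm x)$ and the constraint $\psi=0$) and meets neither zero section: $\tau\neq0$ makes the $\omega$-component of the output covector nonzero, and the input covector $-\tfrac{2\omega_0}{c_0}\tau\,\nx\dR(s)$ is nonzero once one excludes the geometrically degenerate configurations with $\nx\dR(s)=0$, i.e.\ where $\dot{\bm\gamma}(s)$ is parallel to the ground-projected line of sight (targets straight ahead of or behind the transceiver).

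The order then follows from the standard count: an FIO with kernel $\int_{\R^N}e^{i\phi}a\,d\theta$, $a\in S^\mu$, $\phi$ nondegenerate, from an input space of dimension $n_X$ to an output space of dimension $n_Y$, has order $\mu+\tfrac{N}{2}-\tfrac{n_X+n_Y}{4}$. Here $\mu=0$, $N=1$ and $n_X=n_Y=2$, so the order is $0+\tfrac12-1=-\tfrac12$; that is, $\mathcal F\in I^{-1/2}(\mathcal C)$.

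The one step that is not routine bookkeeping is the reading of $a$ as a symbol of order $0$ rather than of order $-\infty$: because $\ell$ has compact $\tau$-support, the integral defining $K$ converges classically and $K$ is literally a smooth function of $(s,\omega,\bm x)$, so taken at face value the linearized model operator is smoothing --- not the structurally meaningful statement. Declaring $\ell(\omega_0\,\cdot)$ to be an order-zero symbol (as justified by the earlier remark, uniformly in $\omega_0$) is what makes $-1/2$ and $\mathcal C$ the genuine invariants recorded by the model; one can make this precise by checking that replacing $\ell$ with any $S^0$ symbol sharing its germ at $\tau=0$ alters $\mathcal F$ only by a lower-order FIO with the same canonical relation. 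Granting that, the nondegeneracy check, the Lagrangian computation and the order count are all elementary.
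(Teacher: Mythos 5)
Your proposal is correct in its main structure and follows essentially the same route as the paper: verify that $\phi=\tau\psi$ is a nondegenerate operator phase (immediate since $\partial_\omega\psi\equiv 1$), that the amplitude is an order-zero symbol in $\tau$ (with the same caveat about interpreting the compactly supported cutoff $\ell(\omega_0\cdot)$ as an element of $S^0$), and then apply H\"ormander's order convention $0+\tfrac12-\tfrac{2+2}{4}=-\tfrac12$. One point, however, is handled incorrectly: you claim that the input covector $-\tfrac{2\omega_0}{c_0}\tau\,\nabla_{\bm x}\dot R$ can vanish and that one must exclude targets straight ahead of or behind the transceiver. In fact no exclusion is needed, and this is precisely where Assumption 1 does its work: writing $\nabla_{\bm x}\dot R=-J^*P\dot{\bm\gamma}$, one sees that $J^*P\dot{\bm\gamma}=0$ would force the component of $\dot{\bm\gamma}$ orthogonal to $\widehat{\bm R}$ to be vertical, hence (being orthogonal to $\widehat{\bm R}$, which is never horizontal since the antenna is strictly above the ground) to vanish, which is impossible for a nondegenerate horizontal flight velocity. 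Concretely, for the straight path $\nabla_{\bm x}\dot R=R^{-3}\bigl(-(x_2^2+h^2),(x_1-s)x_2\bigr)$, whose first component never vanishes even at $x_2=0$. So the canonical relation avoids both zero sections without restricting the scene, and your hedge both weakens the theorem unnecessarily and misidentifies where the hypothesis on the flight path enters.
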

%{\it Proof.}

\begin{proof}
To check that $\mathcal F$  given by  \eqref{Wfinal} is an FIO, we need to check the following conditions
on the amplitude and phase of \eqref{Wfinal}.

First, the phase
$\phi(s,\omega,\bm x; \tau) := \tau (\omega - \omega_0 + 2 \omega_0 \dot{R}(s)/c_0)$
is an {\it operator phase function} in the sense of H\"ormander \cite{Hor}, meaning that
its differential in all the variables,
\begin{equation}\label{eqn phasegrad}
d_{s, \omega, \bm x,\tau} \phi = \left( \tau 2 \omega_0 \ddot{R}(s)/c, \tau, 2 \tau \omega_0 d_{\bm x} \dot{R}(s)/c_0, \omega - \omega_0 + 2 \omega_0 \dot{R}(s)/c_0 \right),
\end{equation}
 is nonzero for $\tau\ne 0$.  This is easily checked.

Second, the {\it amplitude}  $a(s,\omega,\bm x;\tau)$   must satisfy certain {\it symbol estimates}.
We note first that  the function  $\ell (\omega_0\tau)$ is a {\it symbol of order 0} in the phase variable $\tau$, which means
that  it  belongs to the class
\begin{equation}	\label{symbolest}
S^0_{1,0}=\big\{ b(s,\omega,\bm x;\tau):
\left|\partial^\alpha_{s,\omega,\bm x}\partial_\tau^j b \right|\le C_{\alpha j}
\left(1+\left|\tau\right|\right)^{-j}\quad \forall j\in \mathbb{N} ,\, \alpha\in \mathbb N^5\big\},
\end{equation}
where $\partial^\alpha_{s,\omega,\bm x} = \partial_s^{\alpha_1} \partial_\omega^{\alpha_2} \partial_{x_1} ^{\alpha_3}  \partial_{x_2}^{\alpha_4} \partial_{x_3}^{\alpha_5}$ and  $\mathbb{N}$ denotes the nonnegative integers.
Moreover, since the amplitude $a$
is a product of a smooth, nonzero
 function of $(s,\bm x)$, independent of $\omega$ and $\tau$,
with the order 0 symbol $\ell (\omega_0\tau)$,
$a$ is also a symbol of order 0, {\it i.e.}, its  derivatives satisfy
\eqref{symbolest} with $\ell (\omega_0\tau)$ replaced by $a(s,\omega,\bm x;\tau)$.

{  Note that, since $\ell(\cdot)$ is of compact support, so is $a$  (as a function of $\tau$).
Thus, $\mathcal F$ is strictly speaking a smoothing operator;
however, in order to understand how singularities in the reflectivity function are transformed into the data, we consider $a$ as a symbol of order 0,
which is uniform in $\omega_0$ and $L$, as mentioned above.
 Note also that $a$ is of compact support in the spatial variables, since we assume that the antenna is compactly supported}.

Finally, the order of the operator comes from the H\"ormander convention
for the orders of FIOs \cite{Hor}:
\begin{align}
\hbox{order}(\mathcal F)&=
\hbox{order}(a)+\frac{\#\hbox{phase vars}}2-\frac{\#\hbox{output vars}+\#\hbox{input vars}}4 \cr
&=0+\frac12-\frac{2+2}4  =-\frac12.
\end{align}
This completes the proof of Theorem \ref{thm F FIO}.

\end{proof}

\medskip
The properties of an FIO involve  the geometry of certain key sets.  The first is called the
  {\it critical manifold} of $\phi$, which  for $\mathcal F$ is defined by
\[Crit_\phi=\left\{(s, \omega, \bm x,\tau): d_{\tau}\phi=\omega - \omega_0 + 2 \omega_0 \dot{R}(s)/c_0 = 0\right\}.\]
Because $\phi$ is nondegenerate in the sense that the full gradient of $d_{\tau}\phi$ in all the variables is nonzero,
it follows that $Crit_\phi$ is a smooth, codimension one surface.

In general, a nondegenerate phase function  determines, in addition to $Crit_\phi$, a second key set,
$\mathcal C $, called the (twisted)
 {\it canonical relation} of $\phi$ (or $\mathcal F$),  which  is a smooth, immersed submanifold of
{ the total cotangent  space of all the variables, both input and output.
The canonical relation describes microlocally
how $\mathcal F$ transforms the locations and directions of singularities in the input variables (i.e., of the reflectivity $V$)
to singularities in the output variables (i.e., of the signal $d$).
Examples of canonical relations include, but are not limited to, the graphs of canonical transformations between the cotangent spaces
of the the input and output variables.}

The twisted canonical relation $\mathcal C$  is  the image of $Crit_\phi$ under the map
$$(s, \omega, \bm x,\tau)\in Crit_\phi  \longrightarrow (s,\omega,d_s\phi,d_\omega\phi; \bm x,-d_{\bm x}\phi)\in T^*\R^2 \times T^*\R^2.$$
 Here $T^*\R^2$ denotes the  cotangent bundle  of $\R^2$,
 which for the purpose of calculations can be considered  as simply  $\R^4$.
With respect to the coordinates
$(s,\omega,\sigma,\Omega;\bm x,\bm\xi)$ on $T^*\R^2\times T^*\R^2$,
$\mathcal C $ can be written as
\begin{align}\label{eqn Cphi}
\mathcal C &= \left\{ (s, \omega, d_s \phi, d_\omega \phi; \bm x, -d_{\bm x} \phi)|_{Crit_\phi} \right \}  \cr
&=\left\{ \left( s, \omega_0 - 2 \omega_0 \dot{R}/c_0 , 2 \omega_0 \tau \ddot{R} /c_0 , \tau; \bm x, \underbrace{-2 \tau \omega_0 d_{\bm x} \dot{R}/c_0}_{\bxi}  \right) : s\in \mathbb{R}, \bm x \in \mathbb{R}^2, \tau \in \mathbb{R} \setminus0  \right\} \cr
&\subset (T^*\R^2\setminus \bm 0)\times  (T^*\R^2\setminus \bm 0).
\end{align}
Here,  $\bm 0$ denotes the {\it zero section}  of $T^*\R^2$, namely $\{(\sigma,\Omega)=(0,0)\}$ and $\bm\xi=\{(0,0)\}$, resp.
{{Since both $\omega_0\ne 0$ and $\tau\ne 0$, the claim above that $\mathcal C $ does not intersect the zero section of either
factor
space follows from Remark \ref{zero_sec} in Sec. \ref{notation} below.}} { From \eqref{eqn Cphi}, one sees that $x, s, \tau$  {
{are coordinates on $C$, which we will use for calculations.}}}
\medskip

In summary,  $\mathcal F$ is an FIO of order $-1/2$ associated with $\mathcal C$,
denoted by \linebreak$\mathcal F\in I^{-\frac12}(\mathcal C)$.
In later sections, we study the  projections from the canonical relation $\mathcal C$ to the two factor spaces $T^*\R^2$ on
the left and right. The left projection $\pi_L: \mathcal C \rightarrow T^*\mathbb{R}^2$ is defined by
\begin{equation}	\label{leftproj}
\pi_L:   (s, \omega, \sigma, \tau; \bm x, \bxi)  \mapsto (s, \omega, \sigma, \tau),
\end{equation}
while the right projection  $\pi_R : \mathcal C  \rightarrow T^*\mathbb{R}^2$ is given by
\begin{equation}	\label{rightproj}
\pi_R:  (s, \omega, \sigma, \tau; \bm x, \bxi) \mapsto (\bm x, \bxi).
\end{equation}
Understanding these projections gives us information about the geometry of $\mathcal C $ and thus the properties of $\mathcal F$.
The next subsection gives the background needed for the subsequent analysis.
\medskip

\subsection{The left and right projections and the Bolker condition}\label{subsec Bolker}

It is a basic aspect of microlocal analysis that, for a general canonical relation, the singularities of the projections $\pi_L$ and $\pi_R$,
in the sense of $C^\infty$ singularity theory \cite{GoGu}, e.g.,
points where their differentials have less than maximal rank,
have important implications for the operator theory of associated FIOs.
In particular, in applications to inverse problems via the study of linearized forward maps,
the singularities of $\pi_L$ and $\pi_R$ determine whether
reconstruction  via filtered backprojection (modulo
smooth errors) is possible,
and allows
the characterization of artifacts
{\it cf.}
\cite{NC04,Fe05,FeGr10,FeQu11,Amb13,FeGaNo13,QuRu13,FeNo15,Amb18}.

 For any canonical relation, say $\mathcal C_0\subset T^*\R^n\times T^*\R^n$
 associated with an FIO $\mathcal F_0$,
 if one of the two  maps $D\pi_L$ and $D\pi_R$
 is nonsingular  at a point $\lambda \in \mathcal C_0 $, then so is the other.
These derivatives  $D\pi_L$ and $D\pi_R$ are each represented by a $(2n)\times (2n)$ matrix,
so the nonsingularity condition takes either of the forms,
\be\label{eqn rightleftnz}
\det(D\pi_L)(\lambda)\ne 0\hbox{ if and only if }\det(D\pi_R)(\lambda)\ne 0.
\ee

We denote by $\Sigma$ the set
  $$\Sigma=\left\{\lambda_0 \in \mathcal C_0: \det(D\pi_R(\lambda_0))=0 \right\}. $$
 If $\Sigma=\emptyset$, we say that  $\mathcal C_0$ is {\it nondegenerate}; otherwise, it is {\it degenerate}.
If  the complement of $\Sigma$ contains a { sufficiently  small}
neighborhood of  $\lambda_0$,  {\it i.e.} the determinant of  \eqref{eqn rightleftnz} is nonzero in a neighborhood of  $\lambda_0$,
then
 $\mathcal C_0$ is a {\it local canonical graph} near $\lambda_0=(y_0,\eta_0,x_0,\xi_0)$
in the sense that  $\mathcal C_0$ is the graph of a canonical transformation
 $\chi:T^*\R^n\to T^*\R^n$ defined near $(x_0,\xi_0)$.
  See, e.g.,  \cite{Hor}, \cite[Thm. 21.2.14]{Hor3}.
  \smallskip

Even though $D\pi_L$ and $D\pi_R$  drop rank on the same set, and their kernels have the same dimension,
 the maps $\pi_L$ and $\pi_R$ may have different types of singularities.
 \smallskip

 Backprojection methods
attempt to form an image by applying the adjoint
$\mathcal F_0^*$ to the data $\mathcal F_0 V$.
If $\mathcal C_0$ is  a  local canonical graph,
the formation of the composition
 $\mathcal F_0^*\mathcal F_0$ is  covered by the {\it transverse intersection calculus}  for FIOs  \cite{Hor,Hor4,Duist},
 resulting in $\mathcal F_0^*\mathcal F_0\in I^{2m}(\mathcal D)$ with $\mathcal D$ a canonical relation
 containing part of the {diagonal} relation,
 $\Delta:=\{(\bm x,\bm \xi, \bm x,\bm \xi): (\bm x,\bm \xi)\in T^*\R^n\setminus 0\}$
 \cite[\S25.2-3]{Hor4}.
If the canonical relation $\mathcal D$ contains only points of the diagonal $\Delta$, then $\mathcal F_0^*\mathcal F_0$
 is a {\it pseudodifferential operator}.  In this case,  {under an illumination assumption,} one can construct a {\it left parametrix}  for
$\mathcal F_0^*\mathcal F_0$, i.e., a pseudodifferential operator $Q$ of order $-2m$ satisfying
$Q\mathcal F_0^*\mathcal F_0=I$ up to a smoothing operator ({ assuming that $F^*F$ is elliptic}).
 This means that $Q \mathcal F_0^*$ is  a filtered backprojection operator that  allows us to reconstruct  a  function $V(\bm x)$  from the
data $\mathcal F_0 V$, up to  a smooth error.
 Any sharp features in $V(\bm x)$ (such as discontinuities or edges)  that are visible in the data will be present in the image,
 and vice versa; we say that there are no {\it artifacts} in the reconstruction.

If, on the other hand,
 $\mathcal D$ contains off-diagonal points, not in the diagonal $\Delta$,
 using straightforward backprojection reconstruction  results in {artifacts},
 i.e., spurious features in the image which are not present in the original scene.
 This is prevented if the Bolker condition \cite{Guill85}  is satisfied, which in the DSAR setting reduces to
\begin{eqnarray}\label{eqn Bolker}
 &(i)& \Sigma=\emptyset \hbox{ (i.e., the projections are nonsingular everywhere);} \cr
 &(ii)& \pi_L \hbox{ is one-to-one (injective).}
\end{eqnarray}

 An injectivity condition on $\pi_L$ such as (ii)  is natural for the prevention of artifacts, since injectivity implies that different points in the scene  correspond (microlocally) to different points in the data, while (i) implies that the standard transverse intersection calculus of FIO applies to the composition $\mathcal F_0^*\mathcal F_0$.
When combined into the Bolker condition \eqref{eqn Bolker},
these ensure that $\mathcal D\subset \Delta$, that
 $\mathcal F_0^*\mathcal F_0$ is a pseudodifferential operator, and that there are no artifacts in reconstructions made using $\mathcal F_0^*$.

\section{Contributions of this paper}

We have shown above that the DSAR operator  $\mathcal F$  is an FIO and consequently can be analyzed by the techniques of microlocal analysis.
In the remainder of this paper, we  analyze the
geometry of the canonical relation $\mathcal C$,
and its effect on the presence (or absence) of artifacts in backprojected images,
by studying the
 geometry of $\mathcal C $, and its
implications for reconstruction of the scene
$V(\bm x)$ via filtered backprojection for two model geometries:
when the transceiver trajectory is either a straight line or a circle,
with   flight path at constant altitude over flat topography.
\smallskip

For the straight-line  trajectory,  we show in \S\ref{sec straight} that $\mathcal C $
is degenerate over the projection of the flight path onto the Earth's surface,
where it has a {\it fold/blowdown} degeneracy,
i.e., $\pi_L$ and $\pi_R$ have singularities of (Whitney) fold  and blowdown types
resp.; descriptions of these singularity classes can be found in Sec. 5.2.
 Our main result
on characterization of $\mathcal F$ for the straight flight path is
\medskip

\begin{theorem}\label{thm line}
If the flight path $\gamma$ is a straight, horizontal line, then the DSAR transform $\mathcal  F$ is an FIO of order $-1/2$, associated to a canonical relation $\mathcal C$  with a fold/blowdown degeneracy. There is a left-right artifact about the projection, $\Gamma$ of the flight path $\gamma$ onto the ground. By suitable beam forming, the artifact can be eliminated, in which case
the data  $\mathcal F V$ determines  any scene $V$ supported away from $\Gamma$, up to a possible   $C^\infty$ smooth error.
\end{theorem}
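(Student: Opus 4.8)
The plan is to make the statement concrete: choose coordinates adapted to the straight flight path, compute the canonical relation $\mathcal C$ of \eqref{eqn Cphi} explicitly, read off its singularity type, and then draw the imaging consequences. After an affine change of slow time I would write $\bgamma(s)=(s,0,h)$ with $h>0$ the constant altitude, so that $\Gamma=\{(x_1,0)\}$. For $\bm x=(x_1,x_2)$ and $R=\big((x_1-s)^2+x_2^2+h^2\big)^{1/2}$, differentiating gives $\dR=(s-x_1)/R$, $\ddR=(x_2^2+h^2)/R^3>0$, $\p_{x_1}\dR=-\ddR$, $\p_{x_2}\dR=(x_1-s)x_2/R^3$, together with the first derivatives of $\ddR$. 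Substituting into \eqref{eqn Cphi} exhibits $\mathcal C$ as the image of $(s,x_1,x_2,\tau)\mapsto\big(s,\ \omega_0(1-2\dR/c_0),\ 2\omega_0\tau\ddR/c_0,\ \tau;\ x_1,x_2,\bxi\big)$ with $\bxi=\tfrac{2\omega_0\tau}{c_0R^3}\big(x_2^2+h^2,\ (s-x_1)x_2\big)$ and $\tau\ne0$.

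\emph{Fold/blowdown.} Since $D\pi_R$ is the identity in the $\bm x$-block and $D\pi_L$ in the $(s,\tau)$-block, each Jacobian determinant reduces to a $2\times2$ one, and a short computation gives $\det D\pi_R=c\,\tau\,x_2(x_2^2+h^2)R^{-6}$ and $\det D\pi_L=c'\,\tau\,x_2(x_2^2+h^2)R^{-6}$ with $c,c'\ne0$. As $\tau\ne0$ and $x_2^2+h^2\ge h^2>0$, both vanish exactly on the smooth hypersurface $\Sigma=\{x_2=0\}=\pi_R^{-1}(\Gamma)=\pi_L^{-1}(\Gamma)$, in agreement with \eqref{eqn rightleftnz}. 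On $\Sigma$ one finds $\ker D\pi_L=\mathrm{span}\{\p_{x_2}\}$, transverse to $T\Sigma=\mathrm{span}\{\p_s,\p_{x_1},\p_\tau\}$, so $\pi_L$ is a Whitney fold; and $\ker D\pi_R=\mathrm{span}\{R^2\p_s+3\tau(s-x_1)\p_\tau\}\subset T\Sigma$, with $\pi_R(\Sigma)=\{x_2=\xi_2=0\}$ of codimension two, so $\pi_R$ is a blowdown (the defining conditions are recalled in the Appendix). This is the first assertion of the theorem.

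\emph{The artifact.} The map $\iota:\mathcal C\to\mathcal C$, $(s,x_1,x_2,\tau)\mapsto(s,x_1,-x_2,\tau)$, is an involution: $\dR,\ddR,\xi_1$ are even in $x_2$ while $\xi_2$ is odd, so $\iota$ fixes $\pi_L$ and intertwines $\pi_R$ with the reflection $\rho:(x_1,x_2,\xi_1,\xi_2)\mapsto(x_1,-x_2,\xi_1,-\xi_2)$ across $\Gamma$. Hence $\pi_L$ is globally two-to-one, branched along $\Sigma$, $\mathcal C$ is not a local canonical graph near $\Sigma$, and the composition calculus for fold/blowdown canonical relations (cf. the references cited after \eqref{rightproj}) places $\mathcal F^*\mathcal F$ in a paired-Lagrangian class $I^{p,l}(\Delta,\Delta')$ with $\Delta'=\{(\lambda,\iota(\lambda))\}=\mathrm{graph}(\rho)$: this is the left-right artifact about $\Gamma$. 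Away from $\Gamma$ it is transparent: splitting $\mathcal C=\mathcal C_+\cup\mathcal C_-$ over $\{\pm x_2>0\}$, each a local canonical graph, and $\mathcal F=\mathcal F_++\mathcal F_-$ accordingly, the terms $\mathcal F_\pm^*\mathcal F_\pm$ are pseudodifferential while the cross terms $\mathcal F_\pm^*\mathcal F_\mp$ are FIOs of order $-1$ associated with $\mathrm{graph}(\rho)$.

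\emph{Beam forming and reconstruction.} Including an antenna beam pattern illuminating only one side, say $x_2>0$ (as in \cite{CB}), multiplies the amplitude of $\mathcal F$ by a cutoff in $\bm x$, leaving an FIO of order $-1/2$ now associated with $\mathcal C_+$. On $\{x_2>0\}$ we have $\Sigma\cap\mathcal C_+=\emptyset$, hence both projections are nonsingular there, and $\pi_L|_{\mathcal C_+}$ is injective: from $(s,\omega,\sigma,\tau)$ recover $s,\tau$, then $\dR=c_0(\omega_0-\omega)/(2\omega_0)$ and $\ddR=c_0\sigma/(2\omega_0\tau)$, and from $\dR=(s-x_1)/R$, $\ddR=(1-\dR^2)/R$ solve uniquely $R=(1-\dR^2)/\ddR$, $x_1=s-\dR R$, $x_2=\big(R^2(1-\dR^2)-h^2\big)^{1/2}>0$. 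So the Bolker condition \eqref{eqn Bolker} holds for $\mathcal C_+$; by the transverse intersection calculus $\mathcal F_+^*\mathcal F_+$ is a pseudodifferential operator of order $-1$, elliptic because the amplitude $a$ is nonvanishing, and a parametrix $Q\in\Psi^1$ gives $Q\mathcal F_+^*\mathcal F_+=I$ modulo a smoothing operator, so $Q\mathcal F_+^*$ is a filtered backprojection recovering $V|_{\{x_2>0\}}$ up to a $C^\infty$ error; beam forming to the other side recovers $V|_{\{x_2<0\}}$, so any $V$ supported away from $\Gamma$ is determined modulo $C^\infty$. The explicit computations above are routine once organized; the real content, and the step I expect to be the main obstacle, is the artifact analysis — verifying that $\mathcal C^t\circ\mathcal C$ composes cleanly, so that $\mathcal F^*\mathcal F$ genuinely lies in $I^{p,l}(\Delta,\Delta')$ with no further contributions concentrated over $\Gamma$, exactly where the splitting $\mathcal F=\mathcal F_++\mathcal F_-$ degenerates. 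What rescues the final statement is that beam forming excises a neighbourhood of $\Gamma$, so the delicate behaviour along $\Sigma$ never enters the reconstruction claim.
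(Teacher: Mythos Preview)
Your proposal is correct and follows essentially the same route as the paper: the same coordinates $\bgamma(s)=(s,0,h)$, the same computation of $\det D\pi_R$ and $\det D\pi_L$ vanishing simply on $\Sigma=\{x_2=0\}$, the same identification of $\ker D\pi_L=\mathrm{span}\{\partial_{x_2}\}$ (fold) and $\ker D\pi_R\subset\mathrm{span}\{\partial_s,\partial_\tau\}\subset T\Sigma$ (blowdown), the same reflection artifact $\chi(x_1,x_2,\xi_1,\xi_2)=(x_1,-x_2,\xi_1,-\xi_2)$, and the same appeal to the paired-Lagrangian class $I^{-1,0}(\Delta,\mathcal C_\chi)$ via \cite{Fe05}. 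Your treatment is in fact slightly more explicit than the paper's in two places: you write down the kernel vector field $R^2\partial_s+3\tau(s-x_1)\partial_\tau$ for $D\pi_R$ rather than just noting it lies in $\mathrm{span}\{\partial_s,\partial_\tau\}$, and you verify the global injectivity of $\pi_L|_{\mathcal C_+}$ by actually solving for $(R,x_1,x_2)$ from $(\dot R,\ddot R)$, whereas the paper simply asserts that beam forming yields a canonical graph satisfying Bolker.
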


{{The proof of Theorem \ref{thm line} will be given in \S\ref{sec straight}.}}

\medskip

Canonical relations with fold/blowdown singularities  were studied by Guillemin \cite{GuillBook} and,
in the context  of standard monostatic SAR with a straight flight path and an isotropic antenna pattern,
by Nolan and Cheney \cite{NC02} and Felea \cite{Fe05}
\footnote{{ This is also the transpose of the blowdown/fold geometry that occurs for restricted X-ray transforms \cite{GrUh89}.}}.
Such canonical relations correspond to problems in which
 naive filtered backprojection results in left-right artifacts,
with objects to one side of the flight path appearing in the image on both sides.
Consequently, whenever it is possible, such systems
use side-looking antennas, so that only one of the ambiguous locations is illuminated.
With a side-looking antenna beam pattern, the
Bolker condition is satisfied (which, in this equi-dimensional setting means that $\mathcal C$ is the graph of a canonical transformation $\chi:T^*\R^2\to T^*\R^2$),
and consequently artifact-free reconstruction of the scene
$V(\bm x)$ from the data $W(s,\omega)$ is possible.
This allows for stable reconstruction of the scene $V(\bm x)$ from the data $W(s,\omega)$ by  filtered backprojection,
without any artifacts due to geometry.

\medskip

On the other hand, for a circular flight path  we show in the lengthier analysis in \S\ref{sec circular}   that
the forward operator  $\mathcal F$
has a canonical relation $\mathcal C$  with a more complicated  {\it fold/cusp} degeneracy (i.e., $\pi_L$ has a fold singularity and $\pi_R$ has a cusp singularity).
There is no longer a simple left-right artifact; it is a singular canonical relation, called an open-umbrella (See Remark \ref{rem circular}).
However,
 by an appropriate choice of antenna beam pattern, one can again
restrict the microlocal support
so that the Bolker condition is satisfied, and
this enables stable reconstruction of the scene, up to a smooth error.
We give explicit criteria for portions of the scene that can be imaged in this way.
Our main result for the circular flight path {{is the following, the proof of which  is given in \S\ref{sec circular}.}}

\begin{theorem} \label{thm circle}
If the flight path is circular, the DSAR map $\mathcal  F$ is an FIO of order $-1/2$ associated to a canonical relation $\mathcal C$ with a
fold/cusp degeneracy.
For a scene $V$ with suitable support, or by suitable beam forming, the associated artifact can be eliminated,
and then the data  $\mathcal F V$  determines  $V$ up to a possible smooth  $C^\infty$ error.
\end{theorem}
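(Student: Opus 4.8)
\emph{Proof strategy.}
The plan is to follow the same microlocal template as for the linear flight path in Theorem \ref{thm line}, the new feature being the more complicated geometry of the canonical relation. First I would fix $\bm\gamma(s) = (r\cos s,\, r\sin s,\, h)$ with $r,h>0$, and compute $R(s)$, $\dR(s)$, $\ddR(s)$, $\dddot R(s)$ and the horizontal gradients $\nx\dR$, $\nx\ddR$ appearing in \eqref{eqn Cphi}. Exploiting the rotational symmetry, it is convenient to work in the co-rotating horizontal frame centred at the antenna, with coordinates $u = x_1\sin s - x_2\cos s$ (the along-track offset of the scatterer) and $w = x_1\cos s + x_2\sin s - r$ (its cross-track offset), so that $R^2 = u^2 + w^2 + h^2$, $\dR = ru/R$, and $\ddR$ is given by an explicit formula in $(u,w)$. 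Substituting into \eqref{eqn Cphi} makes $\mathcal C$, $\pi_L$ and $\pi_R$ explicit in the coordinates $(s,u,w,\tau)$ on $\mathcal C$.

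Next I would locate $\Sigma$ and classify the projections there. A Laplace expansion shows that $\det D\pi_L$ equals, up to a nonvanishing factor times $\tau$, the Jacobian of $\bm x\mapsto(\dR,\ddR)$ at fixed $s$, i.e.\ $\nx\dR\times\nx\ddR$; carried out in the $(u,w)$-frame this collapses to a single polynomial, so $\Sigma$ is cut out by one scalar equation in $R$, $u$, $w$ (schematically of the form $h^2R^2 = rw(R^2-u^2)$). A crucial point to verify is that this equation has no solution unless the scatterer lies beyond a critical ground range determined by $r$ and $h$, so that a scene supported inside that range never meets $\Sigma$. On $\Sigma$ one then checks that $\ker D\pi_L$ is one-dimensional and transverse to $T\Sigma$, so $\pi_L$ is a Whitney fold, whereas $D\pi_R$ drops rank on the same $\Sigma$ with a cusp-type degeneracy in the sense of the Appendix (the kernel of $D\pi_R$ becoming tangent to $T\Sigma$ along a codimension-one subvariety of $\Sigma$); together these give the fold/cusp degeneracy.

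The artifact is then described as follows. The Bolker condition \eqref{eqn Bolker} fails both because $\Sigma\neq\emptyset$ and, more importantly, because $\pi_L$ is not injective: prescribing a data element fixes $s$, $\tau$, $\dR$ and $\ddR$, and solving for the scatterer in the co-rotating frame reduces, after eliminating $R$, to a quadratic in $w$. Hence $\pi_L$ is generically two-to-one, the two preimages being exchanged by an explicit ``ambiguity map'' $\iota$ that generalizes the left--right reflection of the linear case, and $\mathcal C^t\circ\mathcal C = \Delta\cup\mathcal A$ with $\mathcal A = \{(\pi_R(\lambda),\pi_R(\iota\lambda))\}$ the artifact relation, which carries $\mathrm{WF}(V)$ to a spurious location; near $\Sigma$ the composition is more delicate and is governed by the fold/cusp structure.

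Finally I would exhibit conditions that eliminate the artifact. The support hypothesis puts the entire scene inside the critical range, which already forces $\Sigma=\emptyset$; combining this with a further restriction --- a support condition that prevents both branches of $\iota$ from being present (this is where echoes coming from straight ahead or behind are excluded), or equivalently a side-looking beam pattern restricting the microlocal support of $\mathcal C$ in the $\bxi$ variable --- one obtains a restricted canonical relation $\mathcal C'$ on which $\Sigma=\emptyset$ and $\pi_L$ is injective, i.e.\ the Bolker condition holds. Then $\mathcal C'$ is a local canonical graph, so by the transverse intersection calculus for FIOs \cite{Hor,Hor4,Duist} the operator $(\mathcal F')^{*}\mathcal F'$ is a pseudodifferential operator of order $-1$, elliptic on the visible part of $T^*\R^2$ over the scene; a left parametrix $Q$ of order $1$ then gives $Q\,(\mathcal F')^{*}\mathcal F'\,V = V$ modulo $C^\infty$, so $Q\,(\mathcal F')^{*}$ is an artifact-free filtered-backprojection inverse recovering $V$ from $\mathcal F V$ up to a smooth error, which proves the theorem. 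The main obstacle I anticipate is the middle portion: carrying out the nonlinear elimination that defines $\Sigma$ and the fibers of $\pi_L$ in closed form, extracting the exact critical-range and ``straight ahead or behind'' conditions, and verifying the cusp nondegeneracy of $\pi_R$ along $\Sigma$ --- explicit but lengthy, and with the classification of $\pi_R$ complicated by its singularity type not being constant along $\Sigma$.
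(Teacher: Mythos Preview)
Your proposal follows the same architecture as the paper's proof and is correct in outline. The main substantive difference is the choice of coordinates on $\mathcal C$: you work in a co-rotating Cartesian frame $(u,w)$, whereas the paper introduces hyperbolic-type coordinates $(u,v)$ via
\[
\bm x-\rho\bm e=\rho h\sinh(v)\,\bm e+\rho h\,\frac{u}{\sqrt{1-u^2}}\cosh(v)\,\bm e^\perp,
\]
which has the virtue that $\dot R=-\rho u$ exactly, so the Doppler coordinate is isolated and any degeneracy of $\pi_L$ is forced into the single variable $v$ (the rank-drop condition becomes simply $\partial_v\ddot R=0$). This makes the defining equation for $\Sigma$ and the fold/cusp verifications considerably shorter than they would be in your $(u,w)$ frame, though your route would reach the same conclusions after more algebra. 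The paper also passes to a third pair $(p,q)=((\bm x-\rho\bm e)\cdot\bm e/\rho,\ \bm x\cdot\bm e^\perp/\rho)$---essentially your $(w,-u)$ up to scaling---to write $\Sigma$ as the zero set of the cubic $\tilde g(p,q)=p^3-h^2(p^2+q^2)+h^2p-h^4$, from which smoothness of $\Sigma$ and the cusp structure of $\pi_R$ along $\Sigma_{1,1}=\{q=0\}\cap\Sigma$ are read off directly.

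One point to correct in your last paragraph: the forward/backward exclusion (your ``straight ahead or behind'', the paper's $u\ne\pm1$) is \emph{not} what kills the two-valuedness of $\pi_L$; it is only needed so that the $(u,v)$ chart is valid on $\mathcal C$. The paper's sufficient condition for the Bolker condition is a \emph{range} restriction: the local-canonical-graph region is $|\bm x|<\rho(h^2+1)$, and the \emph{stronger} bound $|\bm x|<\rho(h^2+1)/2$ guarantees injectivity of $\pi_L$ as well. This comes from showing that, for fixed $u$, the map $v\mapsto\ddot R$ is injective on that smaller disk, via a quadratic in $e^v$ rather than in $w$. So the artifact is removed by confining the scene to a disk of half the critical radius, not by a side-looking beam in the $\bxi$ variable as in the linear case; your anticipated ``ambiguity map $\iota$'' does not have the simple reflection form here, and the remedy is correspondingly different.
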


\medskip

In \S\ref{sec beyond} we modify the  start-stop approximation used
in the main analysis
by adding a first-order correction term,
modeling a nonzero transit time for the wave.
As a result,
the scattered wave is being received at a later time,
and thus a different point along the flight path,
 than when and from where it was transmitted.
We show that the result in Thm. \ref{thm line} characterizing $\mathcal F$ and its canonical relation  for the linear flight path
 under the  start-stop approximation
is stable with respect to this correction, even though the blowdown singularity class
to which $\pi_R$ belongs is not structurally stable and thus is sensitive to general perturbations.
This supports the robustness of the results derived under the  start-stop approximation.
\medskip

We begin, in the next section, with generalities needed for the analysis of  the straight and circular flight paths,
but which would be applicable to other geometries as well.

\section{Notation, key properties, projections and singularities} \label{notation}

We will need properties of the range function $R$ defined in \eqref{def R} and its derivatives.
Recall from \eqref{def R} that  the range vector is $\bm{R}(s):= (\bm x,0)-\bm\gamma(s)$
and the range function is $ R(s):=|\bm{R}(s)|$ (with $\bm x$ suppressed in the notation).
Denote the unit vector $ \bm R(s)/R(s)$ by $\widehat{\bm R}$.
Let $J$ denote the natural inclusion of $\mathbb R^2$ into $\mathbb R^3$,
$J (x_1, x_2) := (x_1, x_2, 0)$,
and $J^*$ its transpose,  which is the projection  $ J^* (\xi_1, \xi_2, \xi_3) := (\xi_1, \xi_2)$.
Finally, we also define the scaled projection,
 \begin{equation}
 P \bm v := \frac{ \bm v - \widehat{\bm R} ( \widehat{\bm R} \cdot \bm v)} {R} = \frac{I - \widehat{\bm R} \otimes \widehat{\bm R}} {R}  \bm v.
 \end{equation}

 With the convention throughout that the dot above a symbol means partial differentiation with respect to $s$,
 one easily verifies the following identities:
 \begin{align}
 \dot{R} &= - \widehat{\bm R} \cdot \dot{\bgamma},\label{eq f1}\\
 d_{\bm x} \dot{R} &= -J^* P \dot{\bgamma}. \label{eq f4} % \\
 \end{align}

\begin{remark} \label{zero_sec}
Assumption \ref{height_above} implies that $J^*P\dot\gamma \neq 0$, since $\widehat{\bm R}$ is never horizontal,
and therefore from \eqref{eqn Cphi} and \eqref{eq f4} we see that $\bm \xi \neq \bm 0$ in $\mathcal C $.
\end{remark}

\medskip

\subsection{Properties of the right and left projections}\label{subsec rightleft}
The right projection \eqref{rightproj} $\pi_R : \mathcal C \rightarrow T^*(\mathbb{R}^2)$, expressed in terms of the coordinates
$(\bm x,s,\tau)\in \R^3\times (\R\setminus 0)$ on $\mathcal C$ and $(\bm x, \bxi)$ on $T^*\R^2$,
has derivative

\begin{equation} \label{right_deriv}
\renewcommand\arraystretch{1.3}
D_{\bm x, s, \tau} \pi_R = \begin{pmatrix}
\begin{array}{c|c}
  \bm I_{2\times 2}  &
 \bm 0_{2\times 2} \\
  \hline
  \bm 0_{2\times 2}  & \frac{D(\bxi)}{D(s, \tau)}
\end{array}
\end{pmatrix}.
\end{equation}
Recalling that $\bxi = -2\tau\omega_0 d_x \dot{R}/c_0$, we have
(using the parametrization (\ref{eqn Cphi}))

\begin{align}\label{eqn pir}
 \frac{D(\bxi)}{D(s, \tau)} = \frac{-2 \omega_0}{c_0}
	\bigg[ \tau \partial_s d_{\bm x} \dot{R} \ , \ d_{\bm x} \dot{R} \bigg]
= \frac{-2 \omega_0}{c_0}
	\bigg[ \tau d_{\bm x} \ddot{R} \ , \ d_{\bm x} \dot{R} \bigg].
 \end{align}
We  will thus examine conditions under which  $d_{\bm x} \ddot{R}$ and $d_{\bm x} \dot{R} $
 are linearly independent and,  when they are not, analyze the kernel of \eqref{right_deriv}.
 \smallskip

For the Bolker condition \eqref{eqn Bolker},  it is  important for us to understand the injectivity (or its failure) of  $\pi_L$, which reduces to the injectivity of
 \begin{equation}
 \bm x \mapsto \left( \omega_0 (1 - 2 \dot{R}/c) , 2 \omega_0 \tau \ddot{R}/c_0 \right).
 \end{equation}
This in turn reduces to the question of the injectivity of the map
 \begin{equation}
 \bm x \mapsto (\dot{R}, \ddot{R}).
 \end{equation}
For two points $\bm x,\, \bm y\in\mathbb{R}^2$, labeling  the associated quantities with subscripts for clarity,
the condition $\dot{R}_{\bm x} = \dot{R}_{\bm y}$ is easily understood as the Doppler condition.
By \eqref{eq f1}, it says that the down-range relative velocity $\dot{R}_{\bm x} =
-\widehat{\bm R}_{\bm x} \cdot \dot{\bgamma}$ to $\bm x$ is the same as that to $\bm y$,
 or alternatively that the unit vectors $\widehat{\bm R}_{\bm x}$ and $\widehat{\bm R}_{\bm y}$
 lie on the same circle in the plane perpendicular to $\dot{\bgamma}$,
 or alternatively, that $\bm x$ and $\bm y$ must lie on the same circular cone with vertex $\bgamma$ and axis $\dot{\bgamma}$.
 On the other hand, the condition $\ddot{R}_{\bm x} = \ddot{R}_{\bm y}$ seems harder to characterize.
 \smallskip

In the next two sections we  focus on    two particular  flight trajectories, either straight  or circular.
For simplicity,  as noted above we will  assume that the flight path
is at a constant altitude over a flat landscape, although that is not necessary for the application of the microlocal approach.
\medskip

\subsection{Singularities of smooth maps}\label{sec appendix}

For the convenience of the reader, we  summarize the needed definitions of  singularity classes;
{  see, e.g., \cite{AGV,BroLan,GoGu} for  more detailed treatments of singularity theory of smooth functions.}
Let $M$ and $N$ be manifolds of dimension $n$; $f:N\rightarrow M$ be a $C^{\infty}$ function; and define $\Sigma\subset M$,

\[\Sigma:=\left\{x\in N\:|\: \det (Df(x))=0\right\}.\]
For the classes considered, we make the basic assumption that $d\left(\det \left(Df\left(x\right)\right)\right)\ne\nolinebreak0$,
so that (i) $\Sigma$ is a smooth hypersurface, and (ii) at points of $\Sigma$, $dim\left(\ker Df \right)=1$.
There then exists a (nonunique) {\it kernel} vector field, i.e., a nonzero vector field $V$ along $\Sigma$
such that $Df(p)\left(V\left(p\right)\right)=0$ for all $p\in\Sigma$.
A map $f$ satisfying these conditions is said to have a {\it corank one} singularity.
We now define some classes of corank one singularities.
%\ms

\begin{definition}
$f$ is said to have a  \textit{(Whitney) fold} singularity along $\Sigma$ if it only has corank one singularities and, in addition,   for every $p\in\Sigma$, $\ker Df(p)$ intersects $T_{p}\Sigma$ transversally.
Equivalently, $\langle d(\det Df), V\rangle\ne 0$ on $\Sigma$.
\end{definition}
\smallskip

\begin{definition}
$f$ is said to have a \textit{blowdown} singularity along $\Sigma$
if it only has corank one singularities and, in addition,
$\ker Df(p)\subset T_{p}\Sigma$ for every $p\in\Sigma$.
\end{definition}
\smallskip

\begin{definition}\label{def cusp}
$f$ is said to have a \textit{cusp} singularity along $\Sigma$
if it only has corank one singularities and, in addition,
at any point $p\in\Sigma$ where $\langle d(\det Df), V\rangle= 0$,
i.e., where it fails to be a fold, one has
$\left\langle d \left\langle d\left(\det Df, V\right\rangle\right),V\right\rangle\ne 0$.
\end{definition}
\smallskip

More precisely, for a cusp, let $f(x_1, x_2, \dots, x_N)=(x_1, x_2, \dots, h(x))$ with $h(0)=\nolinebreak0$.
Then, $\Sigma_1=\{x: \frac{\partial h}{\partial x_N} (0)=0 \}$ and $f$ has a cusp singularity
at $0$ if $\frac{\partial^2 h}{\partial x_N^2} (0)=\nolinebreak0,\, \frac{\partial^3 h}{\partial x_N^3} (0) \neq 0$
and if rank $\big[\left. d_x(\frac{\partial h}{\partial x_N}) \right|_{x=0}, \left. d_x(\frac{\partial^2 h}{\partial x_N^2})\right|_{x=0} \big]=2$.
In other words, Ker$(d f)=\R\cdot\frac{\partial}{\partial x_N}$ is tangent simply to $\Sigma_1$
along $\Sigma_{1,1}(f)=\{ \frac{\partial h}{\partial x_N} (0)=\frac{\partial^2 h}{\partial x_N^2} (0)=0 \}$, and the gradients of $\frac{\partial h}{\partial x_N}$ and $\frac{\partial^2 h}{\partial x_N^2}$ are linearly independent at $x=0$.

 \section{The case of a linear flight path}\label{sec straight}

Without loss of generality, the  trajectory of a straight flight path at height $H>0$ and of constant, unit speed
can be assumed to be $\bgamma(s) = (s, 0, H)$.
For the straight flight path, $R$ becomes $R=\sqrt{(x_1-s)^2+x_2^2+H^2}$ and the derivatives with respect to $s$ become

 \begin{align}
 \dot{R} &= - \frac{x_1 - s }{R}, \label{straight 1}\\
 \ddot{R} &=  \frac{x_2^2 + H^2}{R^3}. \label{straight 2}
  \end{align}

\begin{proof}[{{{\it Proof of Theorem \ref{thm line}}}}]

{{We have}}

\begin{align}
\nx R & = R^{-1}\left(x_1-s,x_2\right),\label{eqn nxR}\\
  d_{\bm x}\dot{R} & = R^{-3}\left( - (x_2^2 + H^2), (x_1 - s) x_2 \right), \label{straight 4} \\
  d_{\bm x}\ddot{R} &= R^{-5}\left( -3(x_1 - s) (x_2^2 + H^2) , \  2 x_2 R^2 - 3 x_2 (x_2^2 + H^2) \right).\label{straight 5}
 \end{align}
 Computing the determinant of the right projection $\pi_R:  (x_1, x_2, s, \tau) \mapsto (x_1, x_2, \xi_1, \xi_2)$,
 from (\ref{eqn pir}) we obtain
 \begin{eqnarray*}
 \det \left(\frac{D(\xi_1, \xi_2)}{D(s,\tau)}\right) = \left(\frac{4\omega_0^2}{c_0^2}\right) \det (d_{\bm x}\dot{R}, d_{\bm x}\ddot{R} )
 = \left(\frac{4\omega_0^2}{c_0^2R^{6}}\right)x_2(x_2^2+H^2).
  \end{eqnarray*}
 Thus, the differential  $D\pi_R$ drops  rank  by one along the hypersurface
 $$\Sigma =\{(s,x_1, x_2, \tau)\in\mathcal C:   x_2=0; \  x_1, s, \hbox{ and } \tau\ne 0  \hbox{ arbitrary }\}, $$
 and drops rank
 simply in the sense that $d\left(\det\left(D\pi_R\right)\right)\ne 0$ at $\Sigma$.
 We note that $\Sigma$ is the set of
points of $\mathcal C$ above the line $\Gamma$,
 the projection of the flight path on to the ground plane.
 In addition, with an isotropic  antenna beam pattern, the entire problem is invariant with respect to reflection about
 the plane $x_2=0$, leading (as we
will see) to a left-right artifact about $\Gamma$ in reconstructions of the scene.
 \smallskip

To classify the type of singularity of $\pi_R$ on $\Sigma$,
one sees from \eqref{right_deriv}, \eqref{eqn pir} that
the kernel of $D\pi_R$, necessarily  spanned by a linear combination of the vector fields
$\partial_s, \partial_{x_1},\partial_{x_2}$ and $\partial_\tau$,
has no $ \partial_{x_1},\partial_{x_2}$ components and so is in fact spanned by a combination of only
$\partial_s$ and $\partial_\tau$,
both of which are tangent to $\Sigma$; thus,
 $\ker(D\pi_R)$ is tangent to $\Sigma$ everywhere.
 Thus (see \S 5.2), $\pi_R$ has a {\it blowdown} singularity at $\Sigma$.
 \smallskip

Similarly, for $\pi_L$, from \eqref{eqn Cphi} one computes that the $\partial_s$ and $\partial_\tau$ components of any vector in
$\ker(D\pi_L)$ must be zero. The kernel is the null space  in the $x_1,x_2$ variables, of $\left[-d_{\bm x}\dot{R}, \tau d_{\bm x}\ddot{R}\right]^T$. Evaluating \eqref{straight 4} and \eqref{straight 5} at $x_2=0$, we see that the second column of this matrix is zero, and thus $\ker(D\pi_L)= span\{   \partial_{x_2} \}$.
This is transversal to $\Sigma$; hence, $\pi_L$  has a {\it fold} singularity at $\Sigma$  (cf. \S 5.2).
{ Thus, the canonical relation $\mathcal{C}$ is
 a fold/blowdown canonical relation as discussed in \S4}.
This {{concludes}} the proof of Thm. \ref{thm line}.
\end{proof}

To summarize: In the case of a straight flight path,
the forward map $\mathcal F$ taking the scene $V$ to the windowed DSAR data $W$
 is   an FIO, given by \eqref{Wfinal},  associated with a fold/blowdown canonical relation $\mathcal C $.
 This means that, as discussed above,
 without beam forming to one side of the flight path or the other,
backprojection will  potentially create left-right  artifacts in the image which are just as strong
 as the bona-fide part of the image.
\begin{remark}\label{rem linear}
As in Felea \cite{Fe05} it can be shown  that $\mathcal F^* \mathcal F \in I^{-1,0}(\Delta, \mathcal C_\chi)$ where
 $\Delta\subset T^*\R^2\times T^*\R^2$ is the diagonal and
 $\mathcal C_\chi\subset T^*\R^2\times T^*\R^2$
 is an artifact relation, given by the graph of the
canonical transformation
 $\chi(\bm x, \bm \xi):=(x_1, -x_2, \xi_1, -\xi_2 )$.

 Here, $I^{p,l}(\Delta,\Lambda)$ is the class of {\it pseudodifferential operators
with singular symbols} introduced in \cite{MeUh79,GuUh81}
now usually referred to as {\it paired Lagrangian operators}.
As a result, an image extracted from $\mathcal F^* \mathcal F$ can have left-right artifacts
just as strong as the actual features being imaged, i.e., the order of $F^*F$ is the same on $\Delta$ and $\Lambda$.
It is also possible to reduce the strength of these artifacts using a filtered backprojection method,
with the principal symbol of the filter vanishing on  $\Sigma$,
along the lines of \cite{FeQu11,QuRu13}, but we will not pursue this here.
%\smallskip

  If, on the other hand, the system uses an antenna beam that illuminates only a region lying entirely to the left or to the right of the flight path,
then $\mathcal F$ is an FIO associated with
a canonical graph, which is
a canonical relation satisfying the Bolker condition;
consequently backprojection produces an image without artifacts.
 \end{remark}

\section{The case of a circular flight path}\label{sec circular}

{{The proof of Theorem \ref{thm circle} requires some preliminary discussion.}}

\subsection{Preliminaries}\label{subsec prelim}

For simplicity, we consider the case of the flight trajectory being a circle of radius $\rho>0$ at constant altitude in $\R^3$
and centered above the origin in $\R^2$, parametrized by
$\bgamma(s) = (\rho\cos s , \rho\sin s, \rho h)$.
 Note that, in this case,  we write the height $H$ as $H=\rho h$,
where $h$
 is a dimensionless parameter (see Fig. \ref{fig:theta_c}).

We write $\bm e(s) = (\cos s, \sin s)$ and note that $\bm e^\perp (s) :=  \dot{\bm e}(s) = (-\sin s, \cos s)$.
This yields
{
\begin{align}
R &=  \sqrt{|\bm x - \rho\bm e(s)|^2 + \rho^2h^2},\label{eq circular1}\\
\dot{R} &=
-\rho \frac{\bm x \cdot \bm e^\perp} {R},\label{eq circular2} \\
\ddot{R} &= \rho \left(\frac{\bm x \cdot \bm e}{R}- \rho \frac{(\bm x \cdot \bm e^\perp)^2}{R^3}\right).\label{eq circular 3}
\end{align}}
 From  (\ref{eqn phasegrad}) we have, on the critical set,
\begin{align}\label{eqn omegaeta}
\omega &= \omega_0 + 2\rho \omega_0 \bm x \cdot \bm e^\perp /(c_0R), \cr
\eta &= -  \frac{2\rho \omega_0 \tau}{c_0} \partial_s \left( \frac{\bm x \cdot \bm e^\perp}{R}\right)
=   \frac{2 \rho \omega_0 \tau}{c_0}  \left(  \frac{\bm x \cdot \bm e}{R} -\rho \frac{\left( \bm x \cdot \bm e^\perp \right)^2}{R^3} \right) \ .
\end{align}

To determine whether artifacts can be avoided in the backprojected image, we consider the left projection,
$\pi_L: \mathcal C \to T^*\R^2$.
 Let $\bm x$ and $\bm x'$  correspond to the same $s$ and $\omega$.
 We consider the case when $\omega = \omega'$  and $\eta=\eta'$. If $R = R'$, the fact that $\omega = \omega'$ implies that
$\bm x \cdot \bm e^\perp = \bm x' \cdot \bm e^\perp$, and $\eta = \eta'$
implies that $\bm x \cdot \bm e = \bm x' \cdot \bm e$. Thus  $\bm x = \bm x'$.

 %%%%%%%%%%%%%%%%%%%%
\begin{figure}[h]
\centering
\includegraphics[width=0.4\linewidth]{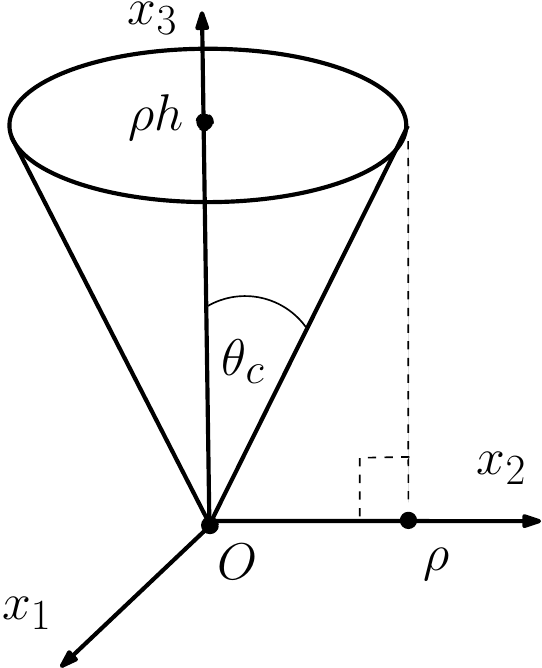}
\caption{Schematic of a circular flight path.}
\label{fig:theta_c}
\end{figure}
%%%%%%%%%%%%%%%%%%%%

\par Hence if there is an artifact for the circular trajectory case, that artifact must have $R \neq R'$.   (As it turns out, neither are they at points that are inverted with respect to the circle.)
This is in contrast to  standard monostatic SAR, since in that case $R=R'$.
\begin{remark} \label{rem combine}
The above shows that any artifacts arising from Doppler imaging for a circular flight path must appear in a location which is different from their location in standard monostatic SAR for the same flight path.  Therefore, in principle, it might be possible to combine traditional monostatic SAR with Doppler SAR imaging in order  to identify and remove artifacts.
\end{remark}

In the next subsection, we investigate if it is possible to localize any backprojected artifacts to be outside the region of interest (ROI).

\subsection{Condition for $\mathcal C$ to be a local canonical graph}
  From the discussion in \S\ref{subsec Bolker}, we know that if $\mathcal C$
  satisfies the Bolker condition \eqref{eqn Bolker}
  then backprojection results in an artifact-free image.
We initially consider the first part of the Bolker condition, namely the requirement that $\mathcal C$
is a local canonical graph, and need to determine where
the derivative of the left projection $\pi_L$  has full rank,
i.e., $\hbox{rank}(D\pi_L)=4$.
We may parametrize $\mathcal C$ using coordinates $(s,\tau, \bm x)$, with respect to which

\begin{eqnarray}
\pi_L(s,\tau,\bm x) = (s,\omega_0(1-2\dot R/c_0),2\omega_0\tau \ddot R/c_0,\tau) \ .
\end{eqnarray}
Next, to  make the study of $\pi_L$ easier, introduce a new, $s$-dependent   coordinate system in the plane,
 $(u,v)$, defined by

\begin{eqnarray} \label{decomp}
 \bm x-\rho \bm e =\rho hS \bm e+\rho h \frac{u}{\sqrt{1-u^2}}C \bm e^\perp,
\end{eqnarray}

where
\begin{eqnarray}\label{def SC}
S=S(v) = \sinh(v),\nonumber\\
C=C(v)= \cosh(v).
\end{eqnarray}
 Note that for $s,\, \bm x$ ranging over compact sets, $u$ satisfies $|u|\le 1-\epsilon$ for some $\epsilon>0$,
and thus $1-u^2$ is bounded away from 0.
The $(u,v)$ coordinates are closely related to elliptic cylindrical  coordinates, but in the plane and centered directly below the antenna;
cf. \cite[\S2.7]{Arf}.

 %%%%%%%%%%%%%%%%%%%%
\begin{figure}[htbp]
\begin{center}
\includegraphics[width = 6cm] {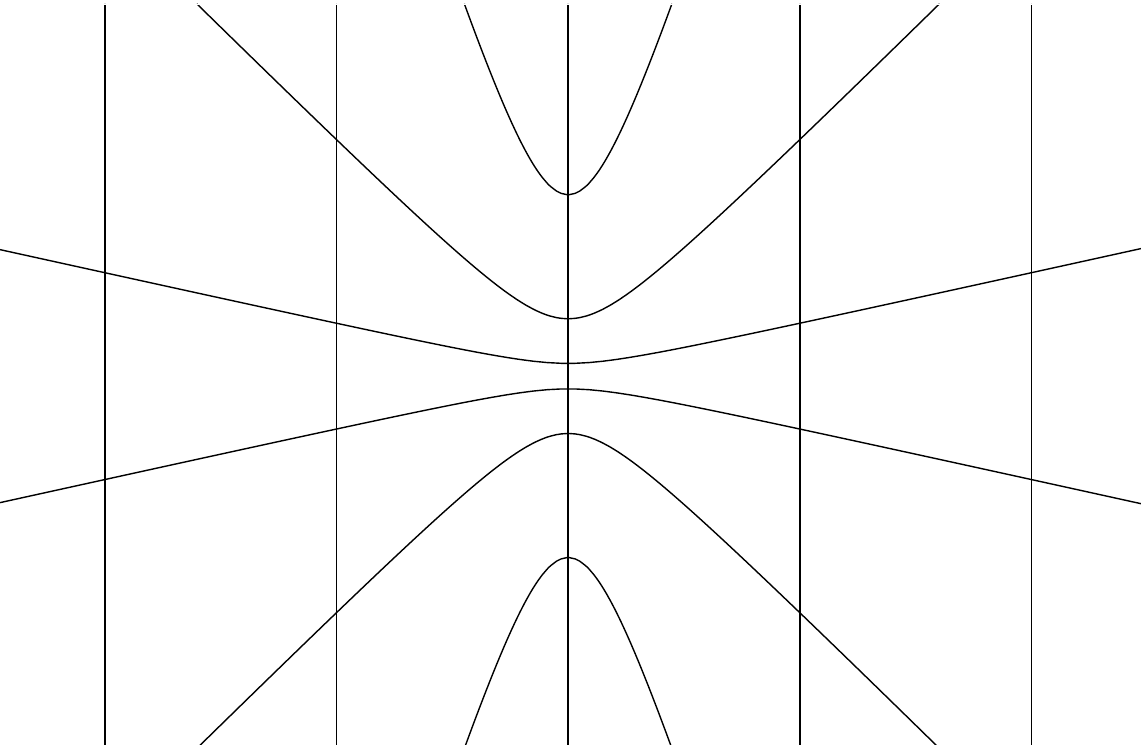}
\caption{ The curves of constant $u$ (hyperbolas) and constant $v$ (vertical lines) for a location on the flight path in which the
flight velocity vector is along the vertical axis.  The coordinate system is centered directly under the antenna.  }
\label{uvcoords}
\end{center}
\end{figure}
 %%%%%%%%%%%%%%%%%%%%

We note  that since $\bm e$ is perpendicular to $\bm e^\perp$, we have immediately
\begin{equation} 	\label{ecomponent}
(\bm x-\rho \bm e ) \cdot \bm e = \rho h S.
\end{equation}

The new coordinate system is better understood once we have calculated $\dot R$ in the new coordinates,
as follows. From \eqref{eq circular2} and \eqref{decomp} we have
\begin{eqnarray}	\label{dotReq}
\dot R =  -\rho^2 \frac{huC}{R\sqrt{1-u^2}},
\end{eqnarray}
where
\begin{eqnarray}
R^2 = \rho^2 h^2S^2+\frac{\rho^2h^2u^2C^2}{1-u^2} + \rho^2 h^2 , \nonumber
\end{eqnarray}
so that
\begin{align}	\label{Req}
R &= \rho h \sqrt{ (1 + S^2)+\frac{u^2C^2}{1-u^2}  } = \rho h \sqrt{ \frac{(1 - u^2) C^2+u^2C^2}{1-u^2}  }
= \frac{\rho hC}{\sqrt{1-u^2}},
\end{align}
where we have used $C^2 - S^2 = 1$.
Hence, comparing \eqref{Req} with \eqref{dotReq}, we find
\begin{equation}	\label{dotRu}
\dot R = -\rho u.
\end{equation}
Thus the coordinate $u$ is simply proportional to the Doppler shift.
Because $\dot{\bgamma} = (\rho \bm e^\perp, 0) $, from comparing \eqref{dotRu} with \eqref{eq circular2}, we see that
$ u = \widehat{\bm R} \cdot (\bm e^\perp,0)$, which is clearly bounded in magnitude by $1$.
The set $u = \pm 1$ corresponds to the line on the ground directly under the tangent to the flight path at $\bm e(s)$, and we exclude this by keeping the antenna beam pattern away from  the direction of travel (forward and backward).

Observe that we may also parametrize $\mathcal C$ with the coordinates $(s,\tau,u,v)$. To see this, one needs to check that
\begin{equation} \label{cov 1}
\left| \frac{\partial (x_1,x_2)}{\partial (u,v)} \right|  =
\rho^2 \bigg| \begin{pmatrix} a \bm e^\perp\, , &  (b \bm e+d \bm e^\perp) \end{pmatrix} \bigg| \neq 0,
\end{equation}
where
\begin{equation}\label{cov 2}
a:=hC(v) \frac{1}{(1-u^2)^{3/2}};\quad b:=hC(v); \quad d :=  hS(v) \frac{u}{\sqrt{1-u^2}}.
\end{equation}
\eqref{cov 1} clearly  holds true since $h>0$ means that both $a$ and $b$ in \eqref{cov 2} are nonzero.
\medskip

Therefore, by avoiding data from the  forward and backward directions
(or, alternatively, filtering out echoes associated to values of $\dot R$ near $\pm\rho$),
we have that $(s,\tau,u,v)$  forms a valid coordinate system on $\mathcal C$.
 The coordinate system $(s,\tau,u,v)$  is designed to make any degeneracy of the projection $\pi_L$ appear in a single variable,
 namely $v$. In fact, in terms of $(s,\tau,u,v)$, one has
\begin{eqnarray} \label{left_uv}
\pi_L(s,\tau,u,v) = (s,\omega_0(1+2u \rho/c_0),2\omega_0\tau \ddot R(u,v)/c_0,\tau),
\end{eqnarray}

To classify the singularities of $\pi_L$, one needs to find first the set where  $D\pi_L$  drops rank, i.e, where
\begin{eqnarray*}
\frac{\partial \ddot R}{\partial v} = 0 \ .
\end{eqnarray*}

Computing $\ddot R$ in the original coordinates $x_1, x_2$ first,
and then rewriting it in the new coordinates $u,v$ using  (29) and (30) one obtains

\begin{eqnarray}
\ddot R  &=& \rho \frac{\bm x\cdot \bm e}{R}-\rho^2\frac{(\bm x\cdot \bm e^\perp)^2}{R^3} \nonumber \\
&=&\rho (1+hS)\frac{\sqrt{1-u^2}}{hC}-\rho\frac{h^2u^2C^2}{1-u^2}\cdot \frac{(1-u^2)^{3/2}}{h^3C^3}, \nonumber
\end{eqnarray}
which leads to
\begin{equation}\label{rddot}
\ddot R = \rho \frac{\sqrt{1-u^2}}{h} \left( \frac{1+hS-u^2}{C} \right)\ .
\end{equation}
Therefore, setting the $v$-derivative of \eqref{rddot} equal to zero shows that
the set  $\Sigma$ of points where $\pi_L$ drops rank is given by
\begin{eqnarray} \label{sigma_1}
\Sigma=\left\{\, (s,\tau,u,v)\in\mathcal C:\,  f(u,v) := h^2+(u^2-1)S h= 0\, \right\}.
\end{eqnarray}

\begin{remark} \label{no-zero}
In $(u,v)$ coordinates, $\bf x=\bf 0$ corresponds to $(u_0,v_0)=(0,\sinh^{-1}(-1/h))$, where we have used (\ref{decomp}). Since $f(u_0,v_0)=h^2+1>0$, we see that the fiber of $\Sigma$ lying over $\bf x=\bf 0$ is empty.
\end{remark}

In particular,
$D\pi_L$  is of full rank %
 if and only if
 $f \neq 0$, where $f$  is defined as in \eqref{sigma_1}.
Since $Sh=\frac{(\bm x-\rho \bm e)\cdot \bm e}{\rho}$, we have $f>0$  iff
$$(1-u^2)  (\bm x - \rho \bm e)\cdot \bm e  < \rho h^2.$$
A sufficient condition for this is
\begin{equation}\label{dop_cond1}
 (\bm x - \rho \bm e)\cdot \bm e < \rho h^2,
\end{equation}
and a sufficient, $\bm e$-independent condition for this to hold is that
\begin{eqnarray*}
\bm x\in \mathcal D_{h,\rho} := \left\{ \ \bm x\in \mathbb R^2\ :\ |\bm x| <  \rho (h^2+1)  \right\}.
\end{eqnarray*}

Summarizing our analysis so far:
Suppose that $\Sigma$  is not in the microlocal support of $\mathcal F$ (for example, suppose
 that beam forming ensures that only points in $\mathcal D_{h,\rho}$ are illuminated).
Then the  canonical
relation  of $\mathcal F$  is a local canonical graph.

\smallskip

\begin{remark}
If the flight path only consists of a portion of the full circle, then equation (\ref{g_func}) below could be used to enlarge the region $\mathcal D_{h,\rho}$ where the canonical relation of $\mathcal F$ is a local canonical graph. We do not pursue this idea further here.
\end{remark}

Instead, in the next three subsections we analyze the structure of $\mathcal C$ near $\Sigma$.  It will be convenient to introduce another set of coordinates as follows:
\begin{eqnarray} \label{def_p}
p := (\bm x -\rho\bm e)\cdot \bm e/\rho=hS,\\
q := (\bm x-\rho \bm e)\cdot \bm e^\perp/\rho=\bm x\cdot \bm e^\perp/\rho.
\end{eqnarray}
Observe that $(s,\tau,p,q)$ also form a coordinate system on the canonical relation $\mathcal C$.  Indeed, since we have already remarked that $(s,\tau,x_1,x_2)$ are coordinates on $\mathcal C$, one only has to notice that for a fixed $(s,\tau)$-value, the map $(x_1,x_2)\mapsto (p,q)$ is a diffeomorphism since the vectors $d_{\bm x}p=\bm e/\rho$
and $d_{\bm x}q=\bm e^\perp/\rho$ are linearly independent. Points on $\Sigma$ satisfy
\begin{eqnarray*}
h^2+(u^2-1)Sh=0,
\end{eqnarray*}
which  from \eqref{Req}  leads to
\begin{equation*}
h^2-\frac{\rho^2h^2C^2}{R^2}p =0,
\end{equation*}
 which, with $C^2 - S^2 = 1$, leads to

\begin{equation*}
h^2R^2-\rho^2(h^2+h^2S^2)p = 0,
\end{equation*}
and hence
\begin{equation}
h^2R^2-\rho^2(h^2+p^2)p = 0.
\end{equation}
Therefore
\begin{eqnarray} \label{g_func}
g(s,x) :=h^2R^2(s,x)-\rho^2(h^2+p^2(s,x))p(s,x) = 0 \label{sigma_rewrite}\
\end{eqnarray}
is a defining equation for $\Sigma$.
 Since $R^2 = \rho^2 (p^2 + q^2 + h^2)$,  \eqref{g_func}
 can also be re-written as
\begin{eqnarray*} \label{g_tilde}
h^2\rho^2(h^2+p^2+q^2)-\rho^2(h^2+p^2)p=0.
\end{eqnarray*}
Hence
\begin{equation}\label{cubic}
\tilde{g}(p,q):=p^3-h^2(p^2+q^2)+h^2p-h^4=0
\end{equation}
is also a defining equation for $\Sigma$. We will next make use of $\tilde g$ to analyze the properties of $\Sigma$.\\

{{{\it Proof of Theorem \ref{thm circle}.}}}
Due to the length of the proof,  it will be presented in parts: first we consider the singularity of $\pi_L$ (Sec. \ref{subsec piL}), 
then the singularity of $\pi_R$ (Sec. \ref{subsec piR}) and finally the elimination of artifacts (Sec. \ref{subsec absence}).

\subsection{The singularity of $\pi_L$}\label{subsec piL}
We showed in the last section that $D\pi_L$ drops rank at $\Sigma$. The following lemma establishes a key property of $\Sigma$.

\begin{lemma} \label{L1}
The  function $\tilde g$ in \eqref{cubic} is a defining function for $\Sigma$:  $d\tilde g\ne0 $ at all points of $\Sigma$.
\end{lemma}
\begin{proof}  
Clearly $(\partial\tilde g/\partial q)(p,q) \neq 0$ when $q\neq 0$, so we just need to check $(\partial\tilde g/\partial p)(p,0) \neq 0$ whenever $\tilde g(p,0)=0$.
To see this, we argue as follows: note that
\begin{eqnarray*}
\frac{\partial \tilde g}{\partial p} (p,q) = 3p^2-2h^2p+h^2.
\end{eqnarray*}
Recall that $p=hS$ and suppose for contradiction that $(\partial \tilde g/\partial p)(p,0)=\tilde g(p,0)=0$ for some $p$.  Then
\begin{eqnarray}
\frac{\partial \tilde g}{\partial p}(p,0)=0 \Rightarrow 3(hS)^2-2h^2(hS)+h^2=0 \nonumber \\
\Rightarrow 3S^2-2hS+1=0 \label{g1}
\end{eqnarray}
and
\begin{eqnarray}
\tilde g(p,0)=0 \Rightarrow (hS)^3-h^2(hS)^2+h^3S-h^4=0 \nonumber \\
\Rightarrow hS^3-h^2S^2+hS-h^2=0. \label{g2}
\end{eqnarray}
Subsitituting (\ref{g1}) into (\ref{g2}) gives
\begin{eqnarray*}
hS^3-h^2S^2+hS+3h^2S^2-2h^3S=0\\
\Rightarrow S^2+2hS+(1-2h^2)=0,
\end{eqnarray*}
where we have cancelled a factor of $hS$, which is valid since (\ref{g1}) tells us that $S\neq 0$.  The roots of this quadratic equation are
\begin{eqnarray}
S=-h\pm \sqrt{3h^2-1}. \label{g3}
\end{eqnarray}
Recalling that for points in $\Sigma$ we have
\begin{eqnarray*}
(1-u^2)S=h
\end{eqnarray*}
and substituting this into (\ref{g3}) gives
\begin{eqnarray}
(1-u^2)(-h\pm \sqrt{3h^2-1})=h, \label{g4}
\end{eqnarray}
which can only be true if we take the plus sign (since $0<h, 0<1-u^2$).  Simplifying (\ref{g4}), we get
\begin{eqnarray*}
\sqrt{3h^2-1}=h+\frac{h}{1-u^2},\\
\end{eqnarray*}
hence
\[h^2\left\{3-\left(\frac{2-u^2}{1-u^2}\right)^2\right\}=1,\]
which leads to
\[\frac{3(1-u^2)^2-(1+(1-u^2))^2-1}{(1-u^2)^2}=1,\]
\begin{equation}\label{final lemma 1}
\frac{2(1-u^2)(-u^2)-1}{(1-u^2)^2}=1.
\end{equation}
This leads to a contradiction since the left-hand side of \eqref{final lemma 1} is strictly negative and it completes the proof of the Lemma. \end{proof}
Note that Lemma \ref{L1} shows that $\Sigma$ is a smooth  hypersurface   in $\mathcal C$.

\smallskip

If we illuminate a neighborhood of the boundary $\partial\mathcal D_{h,\rho}$, then it is of interest to know what kind of singularity $\pi_L$ has at $\Sigma$.  This is answered by the next lemma.
\begin{lemma} \label{L2}
The projection $\pi_L$ has a \textit{fold} singularity at $\Sigma$.
\end{lemma}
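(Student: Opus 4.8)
The plan is to verify the two defining conditions of a Whitney fold for $\pi_L$ at $\Sigma$: first, that $D\pi_L$ drops rank exactly by one along $\Sigma$, with one-dimensional kernel; and second, that this kernel is transversal to $\Sigma$. Since $\mathcal C$ is parametrized by $(s,\tau,u,v)$ and, by \eqref{left_uv}, $\pi_L(s,\tau,u,v) = (s,\omega_0(1+2u\rho/c_0),2\omega_0\tau\ddot R(u,v)/c_0,\tau)$, the Jacobian $D\pi_L$ in these coordinates is block-triangular: the $s$, $\tau$, and $u$ rows are manifestly independent (the $s$-component sees only $s$, the $\tau$-component only $\tau$, and the second component depends on $u$ with nonzero derivative $2\omega_0\rho/c_0$), so the only way rank can drop is through the $v$-dependence of the third component $2\omega_0\tau\ddot R/c_0$. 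Thus $\ker(D\pi_L)$ is spanned by $\partial_v$ precisely on the set $\{\partial_v\ddot R = 0\}$, which by \eqref{rddot} is exactly $\Sigma = \{f(u,v)=0\}$ in \eqref{sigma_1}. This already shows the kernel is one-dimensional and equals $\mathrm{span}\{\partial_v\}$ along $\Sigma$.

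For the transversality of $\ker(D\pi_L) = \mathrm{span}\{\partial_v\}$ to $\Sigma$, I would use the blowdown/fold dichotomy as formulated in the Appendix: the kernel is transversal to $\Sigma$ iff the vector field spanning it is not tangent to $\Sigma$, i.e.\ iff $\partial_v$ applied to a defining function of $\Sigma$ is nonzero on $\Sigma$. Working with the defining function $f(u,v) = h^2 + (u^2-1)Sh$ from \eqref{sigma_1}, one computes $\partial_v f = (u^2-1)h\,\partial_v S = (u^2-1)h\,C$, recalling $S = \sinh v$, $C = \cosh v$ from \eqref{def SC}. Since $h>0$, $C = \cosh v \geq 1 > 0$, and $u^2 - 1 < 0$ (we have excluded $u = \pm 1$ by beam forming away from the flight direction), we get $\partial_v f = (u^2-1)hC < 0$, strictly nonzero everywhere on $\Sigma$. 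Hence $\partial_v$ is nowhere tangent to $\Sigma$, so $\ker(D\pi_L)$ meets $\Sigma$ transversally, and $\pi_L$ has a fold singularity at $\Sigma$.

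The remaining routine check is nondegeneracy of the fold in the strict Morse sense, namely that $\partial_v$ applied \emph{twice} to the appropriate component of $\pi_L$ is nonzero on $\Sigma$ — equivalently, that the second $v$-derivative of $\ddot R$ does not vanish where the first one does. Since $\Sigma$ is cut out by $\partial_v\ddot R = 0$ and we have just shown (via Lemma \ref{L1} and the computation of $\partial_v f$) that this vanishing is simple along $\Sigma$ in the $v$-direction, $\partial_v^2\ddot R \neq 0$ on $\Sigma$, which is exactly the nondegeneracy required. I expect the only mild subtlety to be bookkeeping: one must confirm that the coordinate change $(x_1,x_2) \leftrightarrow (u,v)$ is a genuine diffeomorphism on the relevant region — but this was already established in \eqref{cov 1}–\eqref{cov 2} under the standing assumption that the beam avoids $u = \pm 1$ — so that the singularity type computed in $(u,v)$ coordinates transfers faithfully to $\mathcal C$. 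No step here is a real obstacle; the work of Lemma \ref{L1} has already done the heavy lifting by guaranteeing $\Sigma$ is a smooth hypersurface, and the fold computation is then a short sign analysis of $\partial_v f = (u^2-1)hC$.
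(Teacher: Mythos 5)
Your proof is correct and follows essentially the same route as the paper: identify $\ker(D\pi_L)=\mathrm{span}\{\partial_v\}$ from the block structure of \eqref{left_uv} in the $(s,\tau,u,v)$ coordinates, then verify transversality to $\Sigma$ via $\partial_v f = (u^2-1)hC \neq 0$, which is exactly the computation $\left.\partial f/\partial v\right|_{\Sigma} = -(1-u^2)hC \ne 0$ in the paper. Your closing observation that this same nonvanishing already gives the simple rank drop (the Morse nondegeneracy of $\partial_v^2\ddot R$) is a slight streamlining of the paper's appeal to Lemma \ref{L1} for that point, but it is not a different argument.
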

\begin{proof} Note that, at points of $\Sigma$,   $\ker(D\pi_L)=\textnormal{span}\{\partial_v\}$
and we saw from the proof of Lemma \ref{L1} that $d\tilde g\neq 0$ there as well, so $\pi_L$ drops rank simply at $\Sigma$. Moreover,
\begin{eqnarray*}
 \left. \frac{\partial f}{\partial v}\right|_{\Sigma} = -(1-u^2)hC \ne  0.
\end{eqnarray*}
It follows that $\ker(D\pi_L)$ intersects $T\Sigma$ transversally,  proving the lemma.
\end{proof}

 \subsection{The singularity of $\pi_R$}\label{subsec piR}

Since $D\pi_L$ drops rank by 1 at $\Sigma$, the Thom-Boardman notation of singularity theory suggest relabeling $\Sigma $ as $\Sigma_1$
\cite{GoGu}.
Then, equality (\ref{right_deriv}) shows that $\ker(D\pi_R)$ at points of $\Sigma_1$, denoted $\ker(D\pi_R|_{\Sigma_1})$ is spanned by a linear combinations of $\partial_s$ and $\partial_{\tau}$.
Since $g$ does not depend on $\tau$, it makes sense to  investigate the degree to which $g$ vanishes with respect to $s$ at $\Sigma$. Noting that $\dot p=(\bm x\cdot \bm e^\perp)/\rho$, we have
\begin{eqnarray}\label{gs}
g_s &=& 2h^2R\dot R -\rho(h^2+p^2)(\bm x\cdot \bm e^\perp)-2\rho p^2(\bm x\cdot \bm e^\perp) \nonumber \\
 &=&-2\rho h^2(\bm x\cdot \bm e^\perp) -\rho(h^2+p^2)(\bm x\cdot \bm e^\perp)-2\rho p^2(\bm x\cdot \bm e^\perp)  \nonumber \\
&=& -3\rho (h^2+p^2) (\bm x\cdot \bm e^\perp)
\end{eqnarray}
Hence, if $g(s,\tau, \bm x)=0$, then $g_s(s,\tau, \bm x)=0$ iff $\bm x\cdot \bm e^\perp=0$, i.e., when $\bm x$ lies on the line directly to the left or right of the transceiver.
Thus, again using the Thom-Boardman notation, we investigate the properties of
\begin{eqnarray*}
\Sigma_{1,1} := \left\{(s,\tau, \bm x)\in \Sigma_1\ : \bm x \cdot \bm e^\perp = 0 \ \right\} = \Big\{(s,\tau,p,q)\in \mathcal C\ : \tilde g(p,q)=0, q=0\ \Big\}
\end{eqnarray*}
as follows.

\begin{lemma} \label{L3}
$\Sigma_{1,1}$ is a smooth, codimension one, immersed submanifold of $\Sigma_1$.
\end{lemma}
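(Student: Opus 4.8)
The plan is to exhibit $\Sigma_{1,1}$ as a transverse intersection inside $\mathcal C$ and then invoke the regular-value theorem, importing the one nontrivial inequality from the proof of Lemma \ref{L1}. Recall that $(s,\tau,p,q)$ is a coordinate system on $\mathcal C$, that in these coordinates $\Sigma_1=\Sigma=\{\tilde g(p,q)=0\}$ with $\tilde g$ the cubic of \eqref{cubic}, and that, by the definitions of $p,q$ (with the computation \eqref{gs} as motivation), the extra condition $\bm x\cdot\bm e^\perp=0$ carving $\Sigma_{1,1}$ out of $\Sigma_1$ is exactly $q=0$. Thus $\Sigma_{1,1}=\Sigma_1\cap\{q=0\}$, and it will be enough to show that this intersection is transverse in $\mathcal C$, i.e. that $d\tilde g$ and $dq$ are linearly independent at every point of $\Sigma_{1,1}$; equivalently, that $0$ is a regular value of $q|_{\Sigma_1}$.

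First I would write $d\tilde g=\tilde g_p\,dp+\tilde g_q\,dq$ in the $(s,\tau,p,q)$ chart and note that $\tilde g_q(p,q)=-2h^2q$ vanishes on $\{q=0\}$, so along $\Sigma_{1,1}$ the differential collapses to $d\tilde g=\tilde g_p(p,0)\,dp$, which is independent of $dq$ precisely when $\tilde g_p(p,0)\neq 0$. Since $\tilde g_p(p,0)=3p^2-2h^2p+h^2$, this non-vanishing on $\Sigma_{1,1}$ is exactly the assertion that the system $\tilde g(p,0)=0$, $\tilde g_p(p,0)=0$ has no common solution — which is precisely what the computation in the proof of Lemma \ref{L1} establishes. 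With that in hand, the regular-value theorem gives that $\Sigma_{1,1}$ is a smooth submanifold of $\Sigma_1$ of codimension one (codimension two in $\mathcal C$); concretely, in the chart it is $\{q=0,\ p=p_*\}$ with $p_*$ a real root of $p^3-h^2p^2+h^2p-h^4$, of which there is always at least one since the cubic has odd degree. Immersedness of $\Sigma_{1,1}$ then follows from that of $\mathcal C$, hence of $\Sigma_1$, in $T^*\R^2\times T^*\R^2$.

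I do not expect a genuine obstacle here: the only analytic content — non-vanishing of $\tilde g_p$ along $\Sigma_{1,1}$ — has already been done en route to Lemma \ref{L1}. The single thing to get right is the choice of coordinates: working in $(s,\tau,p,q)$ turns both $\Sigma_1$ and the locus $q=0$ into elementary level sets, so that transversality reduces to one scalar inequality, whereas in the original $(s,\tau,\bm x)$ variables the $s$-dependence of $p$ and $q$ would clutter the computation without adding anything.
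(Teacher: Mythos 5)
Your proof is correct and follows essentially the same route as the paper: in the $(s,\tau,p,q)$ chart, $\Sigma_{1,1}=\{\tilde g=0,\ q=0\}$, and the whole content is the non-vanishing of $\partial\tilde g/\partial p$ on $\{\tilde g(p,0)=0\}$, which is exactly the fact extracted from the proof of Lemma \ref{L1}; the paper states this in one line and you have merely spelled out the transversality/regular-value bookkeeping. (Incidentally, $\tilde g(p,0)=(p^2+h^2)(p-h^2)$, so the unique real root is $p_*=h^2$ and the non-vanishing of $\tilde g_p(p_*,0)=h^4+h^2$ is immediate, but your argument is fine as written.)
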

\begin{proof}
We saw in the proof of Lemma \ref{L1} that $\tilde g(p,0)=0$ implies $\frac{\partial\tilde g}{\partial p}(p,0)\neq 0$ and since $\tilde g$ is smooth, the lemma follows.
\end{proof}

\begin{lemma}
The projection $\pi_R$ has a \textit{cusp} singularity at $\Sigma_{1,1}$.
\end{lemma}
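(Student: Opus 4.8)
The plan is to verify, along $\Sigma_{1,1}$, the intrinsic conditions characterising a \emph{cusp} singularity of $\pi_R$ --- i.e.\ the $\Sigma_{1,1,0}$ type described in the Appendix (see also \cite{GoGu}) --- phrased via a kernel vector field and a defining function for $\Sigma_1=\Sigma$, and carried out in the $(s,\tau,p,q)$ coordinates on $\mathcal C$. First I would collect what is already in hand. By \eqref{right_deriv} and \eqref{eqn pir}, $\ker(D\pi_R)$ along $\Sigma_1$ lies in $\mathrm{span}\{\partial_s,\partial_\tau\}$ and is determined by $\alpha\,\tau\,\nabla_{\bm x}\ddot R+\beta\,\nabla_{\bm x}\dot R=0$; since $\nabla_{\bm x}\dot R=-J^{*}P\dot\bgamma$ never vanishes (Remark \ref{zero_sec}) while $\nabla_{\bm x}\ddot R$ is a scalar multiple of it on $\Sigma_1$, the rank of $D\pi_R$ drops by exactly one there and $\ker(D\pi_R)|_{\Sigma_1}$ is spanned by a smooth vector field $V=\partial_s+c\,\partial_\tau$, where I normalise the $\partial_s$-coefficient to $1$ (it cannot vanish, by the preceding remark) and let $c$ be any smooth extension off $\Sigma_1$. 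Also $\Sigma_1$ is a smooth hypersurface in $\mathcal C$ (Lemma \ref{L1}), $\Sigma_{1,1}$ is a smooth codimension-one submanifold of $\Sigma_1$ (Lemma \ref{L3}), and $\pi_L$ is a fold along all of $\Sigma_1$ (Lemma \ref{L2}); since $\pi_L$ and $\pi_R$ may have different singular types, what remains is to find where $V$ is tangent to $\Sigma_1$ and to determine the order of that tangency.

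The key observation is that $g$, and hence $g_s$, is independent of $\tau$, so that $Vg=g_s$ and $V(Vg)=g_{ss}$, no matter how $c$ was extended. By \eqref{gs}, $g_s=-3\rho\,(h^2+p^2)\,(\bm x\cdot\bm e^{\perp})$, which vanishes on $\Sigma_1$ exactly where $\bm x\cdot\bm e^{\perp}=0$, i.e.\ exactly on $\Sigma_{1,1}$; hence $V$ is transverse to $\Sigma_1$ on $\Sigma_1\setminus\Sigma_{1,1}$ (so $\pi_R$ is merely a fold there, the cusp behaviour being confined to $\Sigma_{1,1}$) and is tangent to $\Sigma_1$ along $\Sigma_{1,1}$. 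Differentiating once more in $s$, using $\dot p=(\bm x\cdot\bm e^{\perp})/\rho$ and $\partial_s\bm e^{\perp}=-\bm e$, the term carrying the factor $\bm x\cdot\bm e^{\perp}$ drops out on $\Sigma_{1,1}$, leaving
\[
 g_{ss}|_{\Sigma_{1,1}} = 3\rho\,(h^2+p^2)\,(\bm x\cdot\bm e).
\]

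Finally I would check this is nonzero. On $\Sigma_{1,1}$ the relation $\bm x\cdot\bm e^{\perp}=0$ forces $\bm x$ to be a scalar multiple of $\bm e$, so $\bm x\cdot\bm e=0$ would give $\bm x=\bm 0$; but $\bm 0\notin\Sigma$ (Remark \ref{no-zero}) while $\Sigma_{1,1}\subset\Sigma_1=\Sigma$, a contradiction. Hence $\bm x\cdot\bm e\neq 0$ and $g_{ss}\neq 0$ on $\Sigma_{1,1}$. Since $V(Vg)\neq 0$ on $\Sigma_{1,1}$, combined with $Vg=0$ there, is precisely the condition that $\pi_R$ has a cusp at $\Sigma_{1,1}$ --- equivalently, that $V$ meets $\Sigma_{1,1}$ transversally inside $\Sigma_1$, i.e.\ that $g$ vanishes to exactly second order along the integral curves of $V$ --- this would complete the proof. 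The one point I would handle carefully, and the natural place for a slip, is the well-definedness of $V(Vg)$: at points where $Vg=0$ its value is unaffected by rescaling $V$ by a nonvanishing function, and here it is independent of the off-$\Sigma_1$ extension of $c$ altogether since neither $g$ nor $g_s$ involves $\tau$; granting this, the argument reduces to the two short derivative computations above together with identities established earlier.
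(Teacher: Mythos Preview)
Your proposal is correct and follows essentially the same route as the paper: both identify the kernel direction as (essentially) $\partial_s$, use $g_s=-3\rho(h^2+p^2)(\bm x\cdot\bm e^\perp)$ to locate $\Sigma_{1,1}$, and show $g_{ss}\neq 0$ there by ruling out $\bm x=\bm 0$ via Remark~\ref{no-zero}. The paper also separately checks that $\nabla\tilde g$ and $\nabla\tilde g_s$ are linearly independent on $\Sigma_{1,1}$ (the rank condition in the Appendix's ``more precisely'' characterization), but this is already forced by your two conditions $d g\neq 0$ and $V(Vg)\neq 0$ on $\Sigma_{1,1}$, since $dg(V)=0$ while $d(Vg)(V)\neq 0$; your handling of the $\partial_\tau$ component of the kernel via $\tau$-independence of $g$ is in fact a bit more careful than the paper's.
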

\begin{proof} The non-vanishing of $d \tilde g|_{\Sigma_{1,1}}$ means that $\pi_R$ drops rank by one,
with $\det(D\pi_R)$ vanishing simply at $\Sigma_1$. Moreover, ker$(D\pi_R|_{\Sigma_{1,1}})\subset T\Sigma_1$.
The vector field $\partial_s|_{\Sigma_1}$ is tangent to $\Sigma_1$ at points of $\Sigma_{1,1}$,
since $g$ and $g_s$ are defining functions for $\Sigma_1$ and $\Sigma_{1,1}$ resp.
Suppose that $g_{ss}(s,\tau, \bm x)=0$ for some point $(s,\tau, \bm x)\in \Sigma_{1,1}$.
Then equation (\ref{gs}) implies $\bm x\cdot \bm e= \bm x\cdot \bm e^\perp=0$, meaning that $\bm x= \bm 0$.
This contradicts Remark \ref{no-zero} since $\Sigma_{1,1}\subset\Sigma_1=\Sigma$.
Therefore $dg(\partial_s)$ has a simple zero at $\Sigma_{1,1}$.
In $(p,q)$ coordinates, $g_s$ becomes $\tilde g_s=-3\rho^2q(p^2+h^2)$.  We need to check that $d \tilde g|_{\Sigma_{1,1}}$ and $d \tilde g_s|_{\Sigma_{1,1}}$ are linearly independent. One has
\begin{eqnarray*}
d g(p,0) = (\partial\tilde g/\partial p(p,0),0)\ ;\quad d\tilde g_s(p,0)  = (0,-3\rho^2(h^2+p^2))
\end{eqnarray*}
However,  we already saw in the proof of Lemma \ref{L1} that $\partial\tilde g/\partial p(p,0)\neq 0$ at points of $\Sigma_{1,1}$.
Therefore, $d \tilde g$ and $d \tilde g_s$ are linearly independent at $\Sigma_{1,1}$.
It now follows (see Def. \ref{def cusp}) that $\pi_R$
has a cusp singularity at $\Sigma_{1,1}$, and the lemma is proved.
\end{proof}

\subsection{Criteria for absence of artifacts}\label{subsec absence}

In general, even if through beam forming the microlocal support of $\mathcal F$ is restricted to a set where $\mathcal C$ is a local canonical graph,
 if $\pi_L$ is not injective, i.e., if the Bolker condition \eqref{eqn Bolker} is violated,
one can expect that the backprojected image $\mathcal F^*W$ will contain artifacts.
Recalling (\ref{left_uv}), we see that the question of whether artifacts will be present in the backprojected 
image boils down to the whether or not, for a fixed value of $u$,  the map
\begin{eqnarray*}
\mathcal V: v\mapsto \ddot R
\end{eqnarray*}
is injective.
So, using \eqref{rddot}, suppose that we specifiy a value
\begin{eqnarray*}
\alpha := \mathcal V(v) = \rho \frac{\sqrt{1-u^2}}{h} \left( \frac{1+hS-u^2}{C} \right)
\end{eqnarray*}

Then we have that
\begin{eqnarray}
h\alpha(e^v+e^{-v})=\rho \sqrt{1-u^2} \left\{ 2(1-u^2)+h(e^v-e^{-v}) \right\} \nonumber \\
\Rightarrow h\beta_- e^v-h\beta_+ e^{-v} +2\rho (1-u^2)^{3/2}  = 0, \label{e1}
\end{eqnarray}
where
\begin{eqnarray*}
\beta_\pm :=\rho \sqrt{1-u^2}\pm\alpha\ .
\end{eqnarray*}
We cannot have both $\beta_+=0, \beta_-=0$, since $\rho>0, u^2\neq 1$. We consider both cases separately, as follows.

{\em Case 1:} Assume $\beta_-\neq 0$. Divide across (\ref{e1}) by $h\beta_-$ to get
\begin{eqnarray} \label{e2}
e^v-\left( \frac{\beta_+}{\beta_-} \right) e^{-v} + 2\eta  = 0,
\end{eqnarray}
where
\begin{eqnarray*}
\eta  := \frac{\rho(1-u^2)^{3/2}}{h\beta_-} .
\end{eqnarray*}
Let $y:=e^v$ and multiply equation (\ref{e2}) by $y$ to get the quadratic equation
\begin{eqnarray*}
y^2+  2\eta  y -\beta_+/\beta_- = 0.
\end{eqnarray*}
The solutions of this equation are
\begin{eqnarray}
y_\pm =-\eta \pm \sqrt{\eta ^2+\beta_+/\beta_-}  \nonumber \\
 \Rightarrow y_\pm =-\eta \left(1\pm\sqrt{1+\beta_+/(\eta ^2\beta_-)}\ \right) \label{roots}
\end{eqnarray}
From the definition of $y$, we must have $y_\pm =e^{v_\pm}>0$ for some value of $v_\pm$ and therefore,  $y_\pm>0$.

Next we  study the term under the square root in \eqref{roots}:
\begin{eqnarray*}
\frac{\beta_+}{\eta ^2\beta_-}=\frac{h^2}{\rho^2(1-u^2)^3}\left\{ \rho^2(1-u^2)-\alpha^2\right\} \ .
\end{eqnarray*}
Since $\eta $ has the same sign as $\beta_-$, the  map $\mathcal V$ will be injective if
\begin{eqnarray} \label{rho_alpha}
\alpha^2 < \rho^2(1-u^2)
\end{eqnarray}
since then, $y_+$ is the only possible positive root if $\beta_- <0$, while $y_-$ is the only possible root if $\beta_->0$.

{\em Case 2:} Assume $\beta_+\neq 0$. Dividing (\ref{e1}) across by $h\beta_+$ we obtain
\begin{eqnarray} \label{e2plus}
e^{-v}-\left( \frac{\beta_-}{\beta_+} \right) e^{v} - 2\tilde\eta  = 0,
\end{eqnarray}
where
\begin{eqnarray*}
\tilde{\eta }=  \frac{\rho(1-u^2)^{3/2}}{h\beta_+} .
\end{eqnarray*}
Letting $\tilde y:= e^{-v}$ and multiplying (\ref{e2plus}) across by $\tilde y$ we get
\begin{eqnarray*}
\tilde y^2-2\tilde{\eta }\tilde y-\frac{\beta_-}{\beta_+}=0.
\end{eqnarray*}
This quadratic equation has solutions
\begin{eqnarray*}
\tilde y_\pm = \tilde \eta \pm \sqrt{\tilde{\eta }^2+\beta_-/\beta_+}\\
\Rightarrow \tilde y_\pm = \tilde\eta \left(1\pm\sqrt{1+(\beta_-/(\tilde\eta ^2\beta_+)}\ \right) .
\end{eqnarray*}
One can easily check that condition (\ref{rho_alpha}) also guarantees that $\beta_-/({\tilde\eta ^2\beta_+)}>0$, which establishes injectivity of $\mathcal V$ in this case too.

\begin{lemma}\label{lem criterion}
  The following inequality guarantees injectivity of $\mathcal V$:
\begin{eqnarray} \label{inj_cond}
(\bm x-\rho \bm e) \cdot \bm e < \rho\left( \frac{h^2-1}{2} \right)\ .
\end{eqnarray}
\end{lemma}
 \begin{proof}
In fact, if (\ref{inj_cond}) holds, then from \eqref{ecomponent} we have
\begin{equation}
hS  < \frac{h^2-1}{2} \ .
\end{equation}
Since $0 \leq u^2 \leq 1$, this implies the inequality $1-u^2+2hS  < h^2$; the left of which is decreased by multiplying by $1-u^2$ to obtain
$(1-u^2)(1-u^2+2hS)  < h^2 = h^2 (C^2 - S^2) $.   Moving $h^2 S^2$ to the left side, we obtain
$ (1-u^2+hS)^2 < h^2C^2  $, which can be rewritten as
$ \frac{\rho^2(1-u^2)}{h^2}\left(\frac{ 1-u^2+hS}{C}\right)^2  < \rho^2(1-u^2)$.
This is exactly condition (\ref{rho_alpha}), as claimed,
so that (\ref{inj_cond}) is sufficient for injectivity of $\mathcal V$.
\end{proof}

Writing \eqref{inj_cond} as $\bm x \cdot \bm e < \rho + \rho (h^2 -1)/2$ and considering all possible locations on the flight path, it then follows that a sufficient condition to guarantee injectivity of  $\mathcal V$ is
\begin{eqnarray}
|\bm x|< \rho\left( \frac{h^2+1}{2} \right) \quad  \Leftrightarrow \quad \bm x\in \mathcal D_{h,\rho/2} \ . \label{inj}
\end{eqnarray}

\noindent{{This concludes the proof of Theorem \ref{thm circle}.\qed}}

\begin{remark}
Note that condition (\ref{inj}) implies that the  left projection  $\pi_L$
is injective, and also guarantees that condition \eqref{dop_cond1} is automatically satisfied,
so that $\pi_L$ is an immersion.
Therefore, $\mathcal C$ is a canonical graph over the region defined by \eqref{inj},  ensuring artifact-free imaging of scenes there via filtered backprojection.
\end{remark}

\begin{remark}\label{rem circular}
To summarize the microlocal analysis so far of DSAR for a circular flight path,   $\mathcal C$ is
a canonical relation whose projections, $\pi_L$ and $\pi_R$,  are of  fold and cusp type, resp.
A similar geometry
appeared in \cite{FeNo15} in the case of monostatic SAR when the flight path had simple inflection points.
In that situation it was shown that $\mathcal F^* \mathcal F$ produces  an artifact relation which is a canonical relation,
with a codimension-two set of  points where it is nonsmooth, called an \emph{open umbrella};
see \cite{FeGr10} for background material on umbrellas and their relevance in seismic imaging, and \cite{FeNo15} for how this geometry arises in monostatic SAR.
\end{remark}

\section{First order correction to the start-stop approximation}\label{sec beyond}

{ We now describe and analyze a correction to  the  start-stop approximation,  Assumption \ref{startstop}, which in the form of
$R(t+T_{tot})=R(t)$
was substituted into \eqref{eqn Ttot} and used to derive \eqref{STFT} and thus \eqref{Wfinal}.
The correction we now consider comes from modifying the discussion below \eqref{def R} by
including a first order term in the expansion of $R(t+T_{tot})$.
 See also \cite{Tsy09,CB2011} for discussions of the start-stop approximation and its limitations.

As in Sec. \ref{sec Doppler}, $T_{tot}$ is determined implicitly by \eqref{eqn Ttot}, namely $c_0T_{tot}=R(t)+R\left(t+T_{tot}\right)$.
However, we now expand
$$c_0T_{tot}=R(t)+R\left(t+T_{tot}\right)\approx 2R(t)+\dR(t)\cdot T_{tot}$$
Solving for $T_{tot}$ and ignoring terms $\mathcal O(c_0^{-3})$, we obtain the first order refinement of  the start-stop approximation, namely
that the total  travel time is
\begin{equation}\label{eqn improved}
T_{tot}\approx 2\coi R(t)+ 2\cot R(t)\dR(t)
\end{equation}
and thus, under this refined approximation, the scattered wave  arrives at time
$$t_{sc}= t+2\coi R(t)+2\cot R(t)\dR(t).$$
Using this, the analogue of $W_0$ from (\ref{STFT}) is}

\begin{align}	\label{STFTcorrected}
W_1(s, \omega) &:= \int e^{i \omega (t-s)} \ell(\omega_0(t-s)) d(t) dt\nonumber\\
&= \int e^{i \omega (t-s)} \ell(\omega_0(t-s)) \int \frac{e^{-i \omega_0 \left(t - 2\left[\coi R\left(t\right)
+\cot R\left(t\right)\dR\left(t\right)\right]\right)}} {\left(4 \pi R\left(t\right)\right)\left(4 \pi R\left(t_{sc}\right)\right)}  V(\bm x) d\bm x dt
\end{align}
Substituting into \eqref{STFTcorrected} the linear terms of the Taylor expansions
\begin{equation}	\label{Taylor2}
R(t) = R(s) + \dot{R}(s) (t-s) + \cdots,\quad \dR(t)=\dR(s)+\ddR(s)(t-s)+\cdots,
\end{equation}
and, calculating {{modulo}} $(t-s)^2$,
yields the approximation $W_1$ of $W$ analogous  to (\ref{Wfinal}):
\begin{equation}\label{Wfinalcorrected}
W_1(s, \omega) = \int e^{i\phi(s,\omega,\bm x;\tau)} a(\bm x, s;\tau) V(\bm x) d\tau d\bm x,
\end{equation}
where

\begin{equation}\label{def phicorrected}
\phi=\tau\left(\omega-\omega_0+2\omega_0\left[\coi\dR\left(s\right)+\cot \left(R\left(s\right)\ddR\left(s\right)+\dR(s)^2\right)\right]\right)
\end{equation}
and
\begin{equation}\label{def ampcorrected}
a(\bm x, s;\tau)=
e^{i \omega_0\left[ 2\left(\coi R\left(s\right)+\cot R\left(s\right)\dR\left(s\right)\right)-  s\right]}
(4\pi)^{-2}R\left(s\right)^{-1}R\left(s_{sc}\right)^{-1} \ell(\omega_0\tau).
\end{equation}
Rewriting the expression in \eqref{def phicorrected} that multiplies $\cot$ as $\left(1/2\right)\left(R^2\right)^{\bm{..}}$
for compactness (where the superscript upper right dots   still denote differentiation in $t$), one sees that
this modified phase function $\phi$ parametrizes the canonical relation

\beqa\label{eqn Cphicorrected}\nonumber
\mathcal C_{mod}\!\!\! &=&\!\!\! \Big\{ \Big( s, \omega_0 - 2 \omega_0\left[\coi \dot{R} +\left(1/2\right)\cot\left(R^2\right)^{\bm{..}}\right],
2  \tau\omega_0\left(\coi  \ddot{R} +\cot(1/2)(R^2)^{\bm{\cdots}}\right), \tau; \cr
& &\quad\, \bm x, -2 \tau \omega_0\left[\coi {d}_{\bm x} \dot{R}
+\left(1/2\right)\cot {d}_{\bm x}\left(R^2\right)^{\bm{..}} \right] \Big)
: s\in \mathbb{R}, \bm x \in \mathbb{R}^2, \tau \in \mathbb{R} \setminus 0  \Big\}.
\eeqa
\medskip

The fold/blowdown structure of the canonical relation in the case of the linear flight track, established in \S\ref{sec straight},
is \emph{not} structurally stable, since arbitrarily small perturbations of blowdowns are not (in general) blowdowns.
Nevertheless, we now show that under the first order correction to
the start-stop approximation, $\mathcal C_{mod}$ is still a fold/blowdown, indicating the robustness of our approach.
\medskip

To see this, note that the analogue of (\ref{eqn pir}) taking the correction into account is
\be\label{eqn pir corrected}
 \frac{D(\bxi)}{D(s, \tau)} =-2\omega_0\left[\tau\ddot\Gamma,\, \dot\Gamma\right],
 \ee
 where
\be\nonumber
\Gamma(\bm x,s)=\coi d_{\bm x} R+(1/2)\cot \dot{d}_{\bm x}(R^2)=\coi d_{\bm x}R+\cot\left[\dR d_{\bm x}R+R d_{\bm x}\dR\right].
 \ee
(Thus, $\mathcal C_{mod}$  is nondegenerate at a point  if and only if  $\Gamma$ is a smooth curve with nonzero curvature for the corresponding $\bm x, s$.) One computes
\be\label{eqn Gammadot}
\dot{\Gamma}=\coi \nx\dR+\cot\left[\ddR\nx R + {2} \dR\nx \dR+R\nx \ddR\right]
\ee
and
\be\label{eqn Gammadotdot}
\ddot{\Gamma}=\coi\nx\ddR +\cot\left[ \dddot R\nx R + {3}\ddR\nx\dR + {3}\dR\nx\ddR+R\nx\dddot R\right]
\ee
Using (\ref{straight 1}-\ref{straight 5}), one finds
$$\ddR \nx R + {2} \dR\nx \dR+R \nx \ddR = \ 0\hbox{ and }\dddot R\nx R + {3}\ddR\nx\dR + {3}\dR\nx\ddR+R\nx\dddot R=0.$$
Thus
\be\label{eqn Gdstraight}
\dot{\Gamma}=(\coi R^{-3})\left[-\left(x_2^2+h^2\right),\left(x_1-s\right)x_2\right].
\ee
Differentiating this directly yields

$$\ddot{\Gamma} = c_0^{-1} R^{-5}\big( -3(x_1 - s) (x_2^2 + h^2) , \  2 x_2 R^2 - 3 x_2 (x_2^2 + h^2)\big), $$
so that

$$\det\left[\ddot{\Gamma},\dot{\Gamma}\right]= \left(\frac{1}{c_0^2R^{6}}\right)x_2(x_2^2+h^2),$$
which vanishes to first order at $(s,\bm x,\tau)\in\Sigma=\{x_2=0\}$. Furthermore,  at these points,
ker$(D\pi_R)=\R\cdot\frac{\partial}{\partial s}\subset T\Sigma$, so that $\pi_R$ has a blowdown singularity at $\Sigma$.
A similar calculation shows that $D\pi_L$ has a kernel whose nonzero elements have a nonzero coefficient of
$\frac{\partial}{\partial x_2}$, so that $\pi_L$ has a fold singularity at $\Sigma$, and $\mathcal C_{mod}$ is a fold/blowdown
 canonical relation.
 \medskip

In the case of the circular flight track, the fold/cusp structure of the canonical relation
derived in \S\ref{sec circular} under the start-stop approximation,
is structurally stable in the following sense: { a small perturbation of the phase function
(say in the $C^4$ topology) will result in a small perturbation of $\mathcal C$
and this causes small $C^3$ perturbations of the projections $\pi_L$ and $\pi_R$.
Folds are stable under $C^2$ perturbations and cusps are stable under $C^3$ perturbations \cite{GoGu},
and thus one expects that the first order correction to the start-stop approximation will not essentially change
the microlocal analysis for a circular flight path:
 $\mathcal C_{mod}$ will still be a fold/cusp,
 so that the artifacts are of the same type and strength as shown above,
 although with their locations  moved slightly. However, we have not proven this and it is a subject for future research.}

\section{Concluding Remarks}\label{sec conclusion}

We briefly compare our results for Doppler SAR with the case of monostatic SAR treated in \cite{NC04}.
In all cases, we can expect the strength of any associated artifacts in the image to be as strong as the
bona-fide part.

For the case of a linear flight path, the results for Doppler are the same as for monostatic SAR,
in the sense that $\pi_L$ and $\pi_R$ have fold and blowdown singularities, resp.
The normal operator $\mathcal F^*\mathcal F$ formed without beam forming will have the same strong artifact,
i.e., $\mathcal F^*\mathcal F\in I^{-1,0}(\Delta, C_{\chi})$, with $C_\chi$ a canonical graph
(cf. Remark \ref{rem linear} in Sec. \ref{sec straight}).

\par On the other hand, for  a circular flight path, the microlocal geometry for  Doppler SAR differs from that for
monostatic SAR:  we have shown that in the case of  Doppler, the singularities of $\pi_L$, $\pi_R$,
are of fold and  cusp type, resp.,
whereas  for monstatic SAR, both singularities are  folds \cite{NC04}.
Regarding the operator $\mathcal F^* \mathcal F$, 
it was shown in \cite{Fe05} that in the monostatic SAR, $\mathcal F^* \mathcal F \in I^{-1,0}(\Delta, \tilde{C})$,
where $\tilde{C}$ is a two-sided fold. For Doppler SAR we expect, based on the results of Felea and Nolan \cite{FeNo15},
that $\mathcal F^* \mathcal F  \in I^{-1}(\Delta, \tilde{C})$, where $\tilde{C}$ is an open umbrella (see Remark \ref{rem circular}).
For the circular flight  path, we  have described an explicit region $D_{h,\rho}$ where the wave front relation of $F$ is a canonical graph, so that no artifacts appear at the back projection, allowing accurate imaging of the terrain.
In addition, as described at the end of Sec. \ref{subsec prelim},
the artifacts arising from Doppler imaging in a circular flight path
geometry  are spatially separated from their location for monostatic SAR for the same flight path.
It should therefore be possible to identify and remove artifacts by combining
monostatic SAR data with Doppler SAR data; we hope
to return to this in the future.

\section*{Acknowledgments}
 This paper originated in joint work  with Margaret Cheney, and we  thank her for suggesting the problem and for her contributions.
 The authors were supported by an AIM  Structured
Quartet Research Ensemble (SQuaRE) at the American Institute of Mathematics, San Jose, CA.
They also participated in the Fall, 2017, program on Mathematical and Computational Challenges
in Radar and Seismic Reconstruction at ICERM, Providence, RI. AG was  supported by NSF award DMS-1362271.

\end{document}